\newcommand{\Cdb}{\ensuremath{\mathbb{C}}}
\newtheorem{theorem}{Theorem}[section]
\newtheorem{lemma}[theorem]{Lemma}
\newtheorem{corollary}[theorem]{Corollary}
\newtheorem{proposition}[theorem]{Proposition}
\newtheorem{definition}[theorem]{Definition}
\theoremstyle{remark}
\newtheorem{remark}[theorem]{\bf Remark}
\theoremstyle{definition}
\numberwithin{equation}{section}
\def\sqr#1#2{{\,\vcenter{\vbox{\hrule height.#2pt\hbox{\vrule width.#2pt
height#1pt \kern#1pt\vrule width.#2pt}\hrule height.#2pt}}\,}}
\def\bo{\sqr44\,}
\def\q{\quad}
\def\vp{\varphi}
\begin{document}

\title[holomorphic homogeneous regular domains]{}
\centerline{\bf\large Infinite dimensional holomorphic homogeneous}
\centerline{\bf\large regular domains}
\vspace{.2in}

\author[C-H. Chu, K-T. Kim, S. Kim]{}
\centerline{\footnotesize Cho-Ho Chu $\cdot$ Kang-Tae Kim $\cdot$ Sejun Kim}

\date{}
\thanks{\noindent \hspace{-.2in} C-H. Chu\\ School of Mathematical Sciences, Queen Mary, University of London,
London E1 4NS, UK\\
e-mail: c.chu@qmul.ac.uk}
\thanks{\noindent K.-T. Kim\\ Center for Geometry and its applications
and Department of Mathematics,
Pohang University of Science and Technology,
Pohang 790-784,
South Korea\\
e-mail: kimkt@postech.ac.kr}
\thanks{\noindent S. Kim\\ Center for Geometry and its applications
and Department of Mathematics,
Pohang University of Science and Technology,
Pohang 790-784,
South Korea\\
e-mail: rlatpwns21@postech.ac.kr}

\begin{abstract} We extend the concept of a finite dimensional {\it holomorphic homogeneous
regular} (HHR) domain and some of its properties to the infinite dimensional setting. In particular,
we show that infinite dimensional HHR domains are domains of holomorphy and determine completely
the class of infinite dimensional bounded symmetric domains which are HHR. We compute the greatest
lower bound of the squeezing function of all HHR bounded symmetric domains, including the two exceptional domains.
We also show
that uniformly elliptic domains in Hilbert spaces are HHR.
\end{abstract}

\maketitle

\section{Introduction}

The concept of a {\it holomorphic homogeneous regular} ({\it HHR}) complex manifold
$M$ of finite dimension has been introduced by Liu, Sun and Yau  \cite{lsy}
in connection with the estimation of several invariant metrics on the moduli
and Teichm\"uller spaces of Riemann surfaces.
It can be described by saying that a particular function
$\sigma : M \rightarrow (0,1]$, called the {\it squeezing function},  has
strictly positive lower bound (cf.\,\cite{deng}). These manifolds possess many important
geometric properties (e.g. all classical metrics on them are equivalent) \cite{lsy,lsy1}
and have also been studied by several
authors (see, for example, \cite{deng,deng1,fo,kim,yeung}) in the case of
complex domains.
In particular,  it has been shown in \cite{yeung} that  a holomorphic homogeneous
regular bounded  domain $D$
in $\mathbb{C}^n$ must be pseudoconvex and all strongly convex domains in $\mathbb{C}^n$
are holomorphic homogeneous regular.
Recently, it has been shown in \cite{kim} that all bounded convex
domains in $\mathbb{C}^n$ are holomorphic
homogeneous regular. The squeezing function on a bounded homogeneous
domain in $\mathbb{C}^n$ is constant, by its holomorphic invariance,
and has been computed explicitly for the four  classical series of Cartan
domains in \cite{k1}. In view of these interesting works, it is natural to ask
if they can be extended to the setting of infinite dimensional domains.

The object of this paper is to begin a study of infinite dimensional holomorphic homogeneous
regular domains. We extend  the concept of a
 holomorphic homogeneous regular domain and generalise the aforementioned results to the infinite
dimensional
setting. In addition, we also obtain new results in finite dimensions, in particular,
the squeezing functions are explicitly computed for all bounded symmetric domains,
including the two exceptional domains, which were left untreated in \cite{k1}.

The concept of the squeezing function for domains in $\mathbb{C}^n$
involves comparing a given domain  with various Euclidean balls via
embeddings. For infinite dimensional domains, we consider their
holomorphic embeddings in {\it Hilbert balls}, that is, open unit balls
of complex Hilbert spaces.

Throughout, all Banach spaces $V$ are over the complex field $\mathbb{C}$ and the dual of
$V$ is denoted by $V^*$.
Let $D$ be a bounded domain in a (complex) Banach space $V$.
We will call a map $f:D_1 \rightarrow D_2$ between two domains a
{\it holomorphic embedding} of $D_1$ in $D_2$ if $f(D_1)$ is a domain
in $D_2$ and $f$ is biholomorphic onto $f(D_1)$.

Let
$B_H =\{x\in H: \|x\| <1\}$ be the open unit ball of a Hilbert space $H$
and denote by
$H(D,B_H)$ the set of all holomorphic embeddings of $D$ into $B_H$, which may be
an empty set.  For instance, if $D$ is the open unit ball of the Banach space $\ell^\infty$ of bounded
sequences, then
$H(D, B_H)=\emptyset$ for any Hilbert ball $B_H$.

In fact, $H(D,B_H) \neq \emptyset$ if and only if the ambient Banach space
$V$ of $D$ is linearly homeomorphic to $H$.
Indeed,  if there is a holomorphic embedding $f:
D \rightarrow B_H$, then $V$, as the tangent space at a point $p$ in $D$, must be
linearly homeomorphic to $H$, which is the tangent space of $f(D)$ at $f(p)$.
Conversely,  if $\varphi : V \rightarrow H$ is a
linear homeomorphism, then  we have
$\varphi(D ) \subset RB_H$ for some $R >0$, and for each $p\in D$,
the map $f: z\in D \mapsto \varphi(z-p)/2R \in B_H$ is a
biholomorphic map onto the domain $f(D)$ in $B_H$, with
$f(p)=0$ and $rB_H \subset f(D) \subset B_H$ for some $r > 0$.

Given $H(D,B_H)\neq \emptyset$,
then for each $p \in D$, the set
$$
\mathcal{F}(p,D) = \{ f\in H(D,B_H):f(p)=0\}
$$
is nonempty, as noted previously. Hence we can
define the {\it squeezing function} $\sigma_D : D \rightarrow (0,1]$ by
$$
\sigma_D (p) =
 \sup_{f\in \mathcal{F}(p,D) } \{r>0: rB_H \subset f(D)\}.
$$
The {\it squeezing constant} $\hat\sigma_D$ for $D$ is defined by
$$
\hat\sigma_D = \inf_{p\in D} \sigma_D(p).
$$
Both the squeezing function and squeezing constant are biholomorphic invariants.

\begin{remark}\label{vh}
We note that, if $H(D,B_H) \neq \emptyset$, then the definition of the squeezing function for a domain
$D \subset V$ does not depend on the chosen Hilbert ball $B_H$.  Indeed, if there
is a holomorphic embedding of $D$ into another Hilbert ball $B_K$ of a Hilbert space $K$, then
the previous remarks imply that
there is a continuous linear isomorphism $T: H \rightarrow K$. Let
$\alpha : H^* \rightarrow H$ and $\beta: K \rightarrow K^*$ be
the canonical isometries. Then the linear isomorphism
$\alpha T^* \beta T: H \rightarrow H$ satisfies
$$
\langle \alpha T^* \beta T x, y\rangle_H
= \langle Tx,Ty\rangle_K \qquad (x, y\in H)
$$
and the linear isomorphism
$T(\alpha T^* \beta T)^{-1/2}: H \rightarrow K$ is an isometry.
It follows that the squeezing functions $\sigma_D$ defined in terms $B_H$ and $B_K$ respectively are
identical.
\end{remark}

We now extend the concept of
a finite dimensional HHR manifold introduced in \cite{lsy,lsy1} to infinite dimensional complex domains.
A finite dimensional HHR domain is also called a domain with {\it uniform squeezing property} in \cite{yeung}.

\begin{definition} \rm A  bounded domain $D$  in a complex Banach space $V$ is called
{\it holomorphic homogeneous regular} (HHR) if $D$ admits a holomorphic embedding into
 some Hilbert ball $B_H$ and  its squeezing function $\sigma_D: D
\rightarrow (0,1]$ has
a strictly positive lower bound, that is, $\hat \sigma_D >0$.
\end{definition}

\begin{remark} If $D$ is an HHR domain in a Banach space $V$, then as noted previously,
$V$ must be linearly homeomorphic to a Hilbert space.
We call $V$ an {\it isomorph of a Hilbert space}. The class of of these
Banach spaces has been characterised by many authors, for instance,
it has been shown in \cite{kw} that a Banach space is an isomorph of
a Hilbert space if and only if it is of type $2$ and cotype $2$.  We
refer to \cite[Chapter IV]{pel} for more details.\end{remark}

For infinite dimensional bounded {\it symmetric} domains, we shall
see that only those of finite rank can be embedded holomorphically in
a Hilbert ball.
We prove the following main results.\\

\noindent {\bf Theorem \ref{pseudo}}.
\noindent {\it An HHR domain is a domain of holomorphy}.\\

This result extends the finite dimensional result in \cite[Lemma 2]{yeung} since a domain of holomorphy in a
Banach space is pseudoconvex (cf.\,\cite[11.4, 37.7]{muji}). We note that a domain of holomorphy
need not be HHR even in finite dimensions, as shown in \cite[Theorem 1]{fo}.

 The following result reveals the connection
between the rank of a symmetric domain and the extent to which a
Hilbert ball can be squeezed inside it.\\

\noindent {\bf Theorem \ref{bd}}.
{\it Let $D$ be a bounded symmetric domain in a complex Banach space $V$. Then
$D$ is HHR if and only if $D$ is of finite rank. In this case,
 $D$ is
biholomorphic to a finite product
$$
D_1 \times \cdots \times D_k
$$
of irreducible bounded symmetric domains and we have
$$
\hat \sigma_D = \left( \frac{1}{\hat \sigma_{D_1}^2} + \cdots
+ \frac{1}{\hat \sigma_{D_k}^2}\right)^{-1/2}.
$$

If $\dim D_j <\infty$, then  $D_j$ is a classical Cartan domain or
an exceptional domain, and $\hat \sigma_{D_j}= 1/\sqrt{p_j}$
where $p_j$ is the rank of $D_j$.

If $\dim D_j =\infty$, then  $D_j$ is either a Lie ball or a type I
domain of finite rank $p_j$.  For a Lie ball $D_j$, we have
$\hat \sigma_{D_j} = 1/\sqrt 2$. For a rank $p_j$ type I domain $D_j$, we have
$\hat \sigma_{D_j} = 1/\sqrt{p_j}$.}\\

\noindent {\bf Theorem \ref{strongconv}}.
{\it Let $\Omega$ be a uniformly elliptic domain in a
Hilbert space $H$.  Then $\Omega$ is HHR.}\\

We introduce the concept of a uniformly elliptic domain in Section \ref{u}, which
generalises the notion of strong convexity.
This theorem generalises the finite dimensional result in
\cite[Proposition 1]{yeung}.

\section{Holomorphic homogeneous regular domains}

We begin our discussion of infinite dimensional HHR domains in this section by showing some properties of
the squeezing function and conclude with a proof of pseudoconvexity for these domains.

Given two (nonempty) sets $A$ and $B$ in  a Banach space $V$, we write
$$d(A,B ) = \inf \{ \|x-y\| : x \in A, y \in B\}$$
and for $p\in V$, write $d(p, B)$ for $d(\{p\}, B)$  which is the distance from $p$ to $B$.
Let $D$ be a bounded domain in $V$, a closed subset $K$ of $D$ is
said to be {\em strictly contained} in $D$ if $d(K, V  \backslash D)>0$.
Let $$B_V (p, r) := \{ z \in V \colon \|z-p\|<r\}$$
denote the norm-open ball centred at $p$ with radius $r>0$.  The open unit ball $B_V(0,1)$ is often written simply
$B_V$. We will make use of the Carath\'eodory
distance $C_D$ on $D$, which is equivalent to the norm-distance on any closed ball (for the norm)
strictly contained in $D$ (see \cite[Theorem IV.2.2]{fv}).
For each $v\in B_V$, we have
$C_{B_V}(v,0) = \tanh^{-1} \|v\|$, by \cite[Theorem IV.1.8]{fv}.

In what follows, the boundary of a topological subspace $E$ of $V$ will be denoted by $\partial E$.
The complement of $E$ in $V$ will be denoted by $E^c$ and as usual, $\overline E$ denotes the closure
of $E$.

We first show that the squeezing function is continuous. Our proof follows the arguments in \cite[Theorem 3.1]{deng}.
It is included for completeness.

\begin{proposition} \label{con} Let $D$ be a bounded domain in a Banach space $V$ linearly homeomorphic
 to a Hilbert space $H$. Then the squeezing function $\sigma_D : D \rightarrow (0,1]$ is continuous.
 \end{proposition}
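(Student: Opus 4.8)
The plan is to control the oscillation of $\sigma_D$ between two nearby points $p,q\in D$ by using biholomorphic automorphisms of the Hilbert ball to transplant near-extremal embeddings based at $p$ into admissible embeddings based at $q$, and vice versa. Fix $p\in D$ and let $q$ range over a small norm-ball around $p$ that is strictly contained in $D$. The first ingredient is a \emph{uniform} bound on how far $f(q)$ can be from the origin as $f$ ranges over $\mathcal{F}(p,D)$: since each such $f$ is a holomorphic map into $B_H$, it is distance-decreasing for the Carath\'eodory distance, so $\tanh^{-1}\|f(q)\| = C_{B_H}(f(p),f(q)) \le C_D(p,q)$, whence $\|f(q)\| \le \tanh C_D(p,q) =: \delta(p,q)$. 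By the local equivalence of $C_D$ with the norm distance on sets strictly contained in $D$ (recorded in the excerpt via \cite[Theorem IV.2.2]{fv}), we have $\delta(p,q)\to 0$ as $q\to p$, and this bound does not depend on the particular $f$.

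First I would carry out the transplantation. Given $f\in\mathcal{F}(p,D)$ with $rB_H\subset f(D)$, set $a=f(q)$ and let $\varphi_a$ be the automorphism of $B_H$ (a M\"obius transformation of the Hilbert ball, available from \cite{fv}) with $\varphi_a(a)=0$. Then $g:=\varphi_a\circ f\in\mathcal{F}(q,D)$ and $g(D)=\varphi_a(f(D))\supset \varphi_a(rB_H)$. The crux is a quantitative distortion estimate: there should be a function $\rho(r,t)$, continuous and satisfying $\rho(r,0)=r$, such that $\varphi_a(rB_H)\supset \rho(r,\|a\|)\,B_H$ whenever $\|a\|<r$ (the hypothesis $\|a\|<r$ is exactly what guarantees $0=\varphi_a(a)\in\varphi_a(rB_H)$, and it holds once $q$ is close enough to $p$, since then $\|a\|\le\delta(p,q)$ is small while $r$ is near $\sigma_D(p)>0$). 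Granting this, letting $r\uparrow\sigma_D(p)$ yields $\sigma_D(q)\ge \rho(\sigma_D(p),\delta(p,q))$, and since $\rho(\sigma_D(p),\delta(p,q))\to\sigma_D(p)$ as $q\to p$ we obtain $\liminf_{q\to p}\sigma_D(q)\ge\sigma_D(p)$. Interchanging the roles of $p$ and $q$ and repeating the same two steps gives $\sigma_D(p)\ge\rho(\sigma_D(q),\delta(p,q))$ for all nearby $q$; passing to a sequence realising $\limsup_{q\to p}\sigma_D(q)$ and using the continuity of $\rho$ together with $\rho(\,\cdot\,,0)=\mathrm{id}$ forces $\limsup_{q\to p}\sigma_D(q)\le\sigma_D(p)$. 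The two inequalities establish continuity at $p$.

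The hard part will be the distortion estimate for $\varphi_a$, since everything else reduces to the contraction property of the Carath\'eodory distance and its comparison with the norm. To prove it I would use the explicit form of the automorphism $\varphi_a$ of $B_H$, which acts as a one-variable M\"obius map along the line $\mathbb{C}a$ and contracts the orthogonal complement $a^{\perp}$; writing $H=\mathbb{C}a\oplus a^{\perp}$, one computes $\varphi_a(rB_H)$ and estimates the radius of the largest origin-centred ball it contains purely in terms of $r$ and $\|a\|$. Because this reduces to the two-dimensional slices spanned by $a$ and a single orthogonal direction, the infinite dimensionality of $H$ introduces no extra difficulty, and the resulting $\rho$ coincides with the one appearing in the finite-dimensional argument of \cite{deng}; the normalisation $\rho(r,0)=r$ falls out of letting $\|a\|\to0$, in which limit $\varphi_a$ degenerates to the identity.
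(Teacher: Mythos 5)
Your proposal is correct, and the overall architecture (transplant a near-extremal embedding based at $p$ into one based at a nearby point $q$, controlling the loss via the Carath\'eodory contraction $\|f(q)\|\le\tanh C_D(p,q)$) is the same as the paper's; but your recentering device is genuinely different. The paper, following Deng--Guan--Zhang, recenters by the \emph{affine} map $f_k(\omega)=(f(\omega)-f(z_k))/(1+\varepsilon)$: this trivially lands in $B_H$, fixes the new base point to $0$, and one checks in one line that it still contains the ball of radius $(\rho-\varepsilon)/(1+\varepsilon)$, so no distortion lemma is needed at all. You instead recenter by a M\"obius automorphism $\varphi_a$ of $B_H$ and must then prove the inclusion $\varphi_a(rB_H)\supset\rho(r,\|a\|)B_H$, which you correctly identify as the technical crux; it is true and provable by the slice computation you sketch, but you can avoid the explicit formula entirely by noting that $\varphi_a$ is a Carath\'eodory isometry, that $rB_H$ is the Carath\'eodory ball of radius $\tanh^{-1}r$ about $0$, and that the triangle inequality then gives $\rho(r,t)=\tanh(\tanh^{-1}r-\tanh^{-1}t)$. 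What your route buys in exchange for the extra machinery is a sharper conclusion: it yields the Lipschitz estimate $|\tanh^{-1}\sigma_D(p)-\tanh^{-1}\sigma_D(q)|\le C_D(p,q)$, i.e.\ an explicit modulus of continuity for $\sigma_D$ with respect to the Carath\'eodory distance, rather than bare continuity. Two small points to tidy up if you write this out: you need $\delta(p,q)<r$ before $\varphi_a(rB_H)$ contains $0$, which you do address, and in the $\limsup$ direction you should dispose separately of the trivial case $\limsup_{q\to p}\sigma_D(q)=0$ before invoking that condition along the realising sequence.
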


\begin{proof} Let $(z_k)$ be a sequence converging to $a\in D$. We show
$$\lim_{k\rightarrow\infty}\inf \sigma_D(z_k) \geq
\sigma_D(a) \geq \lim_{k\rightarrow\infty}\sup \sigma_D(z_k).$$
Let $0< 2\varepsilon <\sigma_D(a)$ and pick $\sigma_D(a) \geq \rho > \sigma_D(a) - \varepsilon$ such that there is a holomorphic
embedding $f: D \rightarrow B_H$ satisfying $f(a)=0$ and $\rho B_H \subset f(D)$. By continuity, we have
$$\|f(z_k)\| <\varepsilon$$
 for $k >K$, for some $K>0$. Consider the holomorphic embedding $f_k : D \rightarrow B_H$ given by
$$f_k(\omega) = \frac{f(\omega) - f(z_k)}{1+ \varepsilon} \qquad (\omega \in D)$$
which satisfies $f_k(z_k) = 0$ and
$$\frac{\rho - \varepsilon}{1+\varepsilon} B_H \subset f_k(D).$$
This gives $$\sigma_D (z_k) \geq \frac{\rho-\varepsilon}{1+\varepsilon}
> \frac{\sigma_D(a)-2\varepsilon}{1+\varepsilon} $$
 for $k>K$ and hence $\lim_{k\rightarrow\infty}\inf \sigma_D(z_k) \geq
\sigma_D(a)$ since $\varepsilon >0$ was arbitrary.

For the upper limit, let $0< 2\varepsilon < \lim_{k}\inf \sigma_D(z_k) $ and let $f_k: D \rightarrow B_H$ be a holomorphic
embedding satisfying $f_k(z_k)=0$ and $\rho_k B_H \subset f_k(D)$ for some $\sigma_D(z_k) \geq \rho_k > \sigma_D(z_k) -\varepsilon$.
Since $C_D(z_k,a) \rightarrow 0$ as $k\rightarrow \infty$, we have
$$\tanh^{-1} \|f_k(a)\|= C_{B_H}(0,f_k(a))\leq C_D(0,a) \rightarrow 0$$ and hence there exists some $M>0$
such that $\|f_k(a)\| < \varepsilon$ for $k>M$. By analogous arguments as before, one obtains
$$\sigma_D(a) \geq  \frac{\rho_k-\varepsilon}{1+\varepsilon}
> \frac{\sigma_D(z_k)-2\varepsilon}{1+\varepsilon}$$
for $k >M$, which gives $\sigma_D(a) \geq \lim_{k\rightarrow\infty}\sup \sigma_D(z_k).$
\end{proof}

Although the continuity of the squeezing function implies readily that if there is a sequence
$(p_k)$ in a {\it finite dimensional} bounded domain $D$ with $\lim_k \sigma_D(p_k) =0$, then the sequence
admits a subsequence $(p_j)$ converging to a boundary point $p\in \partial D$, this is not immediately clear
for infinite dimensional domains. Nevertheless, one can still show, in infinite dimension,
$(p_k)$ has a subsequence $(p_j)$ for which the distance $d(p_j, \partial D)$ to the boundary
tends to $0$. We prove a lemma first.

\begin{lemma}\label{rr} Let $\Omega$ be a bounded  domain in an isomorph $V$ of
a Hilbert space $H$ and $\vp: V \rightarrow H$ a linear homeomorphism.
Then there is a constant $m>0$ such that for each $q\in \Omega$ satisfying $B_V(q,s) \subset \Omega$ for some $s>0$,
we have $$\sigma_\Omega (q)\geq \frac{s}{m^2\|\vp\|\|\vp^{-1}\|}.$$
\end{lemma}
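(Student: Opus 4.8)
The plan is to bound $\sigma_\Omega(q)$ from below by exhibiting a single explicit competitor in the supremum that defines it; since $\sigma_\Omega(q)$ is a supremum over all embeddings in $\mathcal{F}(q,\Omega)$, producing one affine embedding together with a controlled inner ball will suffice. The natural candidate is the affine map already used in the ``Conversely'' construction of the introduction: translate $q$ to the origin, apply $\varphi$, and rescale by a fixed factor so that the image lands inside $B_H$. Concretely, I would set $f(z) = \varphi(z-q)/(2R)$ with $R = \|\varphi\|\,\mathrm{diam}(\Omega)$, which is finite because $\Omega$ is bounded. This $f$ is an affine biholomorphism of $V$ onto $H$, so its restriction is a holomorphic embedding of $\Omega$ with open connected image, and clearly $f(q)=0$, i.e. $f \in \mathcal{F}(q,\Omega)$.

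First I would verify that $f$ maps $\Omega$ into $B_H$: for $z \in \Omega$ one has $\|z-q\| \le \mathrm{diam}(\Omega)$, whence $\|f(z)\| \le \|\varphi\|\,\|z-q\|/(2R) \le 1/2 < 1$, so indeed $f(\Omega) \subset B_H$. Next I would locate an honest ball inside $f(\Omega)$ using the hypothesis $B_V(q,s) \subset \Omega$. Since $f(B_V(q,s)) = \varphi(B_V(0,s))/(2R)$ and, for every $y \in H$ with $\|y\| < s/\|\varphi^{-1}\|$, the point $\varphi^{-1}(y)$ satisfies $\|\varphi^{-1}(y)\| \le \|\varphi^{-1}\|\,\|y\| < s$ and hence lies in $B_V(0,s)$, we obtain $\varphi(B_V(0,s)) \supset (s/\|\varphi^{-1}\|)\,B_H$ and therefore $\frac{s}{2R\,\|\varphi^{-1}\|}\,B_H \subset f(\Omega)$. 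This yields $\sigma_\Omega(q) \ge \frac{s}{2\,\mathrm{diam}(\Omega)\,\|\varphi\|\,\|\varphi^{-1}\|}$, and setting $m = \sqrt{2\,\mathrm{diam}(\Omega)}$ gives exactly the asserted inequality.

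The only point requiring care --- and the reason the normalising constant is taken to be $\|\varphi\|\,\mathrm{diam}(\Omega)$ rather than the $q$-dependent quantity $\sup_{z\in\Omega}\|\varphi(z-q)\|$ --- is that $m$ must be chosen \emph{uniformly}, independent of the point $q$ and the radius $s$; using the diameter of $\Omega$ achieves this at once. No analytic input beyond the elementary operator-norm estimates for $\varphi$ and $\varphi^{-1}$ is needed, and in particular the Carath\'eodory distance plays no role here. Thus I expect no serious obstacle: the entire content is the verification that the single affine embedding $f$ both lands in $B_H$ and contains the prescribed ball.
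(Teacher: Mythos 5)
Your proposal is correct and follows essentially the same route as the paper: both arguments use the affine embedding $z\mapsto\varphi(z-q)$ rescaled by a uniform constant coming from the boundedness of $\Omega$, together with the elementary estimate $\varphi(B_V(0,s))\supset (s/\|\varphi^{-1}\|)B_H$. The only differences are cosmetic bookkeeping of the constant (your $m=\sqrt{2\,\mathrm{diam}(\Omega)}$ versus the paper's radius bound $\Omega\subset B_V(0,m)$ after translation), and you are in fact slightly more explicit about the uniformity of $m$ in $q$.
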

\begin{proof}
By a translation, we may assume $q=0$. Since $\Omega$ is bounded, we have
$\Omega \subset B_V(0, m)$ for some $m>0$ and
\begin{equation}\label{subset}
\frac{1}{\|\vp^{-1}\|} B_H(0,m) \subset \vp (B_V(0,m)) \subset B_H(0,m\|\vp\|) = m\|\vp\|B_H.
\end{equation}
The restriction of $\vp$ to $\Omega$, still denoted by $\vp$, is a holomorphic embedding
of $\Omega$ into $m\|\vp\|B_H$ satisfying $\vp (q)=0$.  It follows from (\ref{subset})
that
$$ \frac{s}{m\|\vp^{-1}\|}B_H(0,m) \subset \vp (B_V(0,s))\subset \vp (\Omega) \subset \vp(B_V(0,m))\subset m\|\vp\|B_H.$$
Hence we have
$$\sigma_\Omega(q) \geq \frac{s}{m^2\|\vp\|\|\vp^{-1}\|}.$$
\end{proof}

\begin{lemma}\label{boundary}  Let $(p_k)$ be a sequence in a  bounded convex domain $\Omega$ in an
isomorph $V$ of a Hilbert space such that $\lim_{k\rightarrow \infty} \sigma_\Omega(p_k)=0$. Then there is a subsequence $(p_j)$ of
$(p_k)$ such that $$\lim_{j\rightarrow \infty} d(p_j, \partial \Omega) =0.$$
Further, there is a sequence $(p_j')$ in $\Omega$ such that
$\lim_{j\rightarrow \infty} \sigma_\Omega(p_j')=0$,
and for each $j$, there exists a boundary point $q_j\in \partial \Omega$
with $\|p_j'-q_j\|= d(p_j', \partial\Omega)$.
\end{lemma}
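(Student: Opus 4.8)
The plan is to split the statement into a distance estimate, which in fact holds already for the whole sequence, and an existence statement for nearest boundary points, which is where passing to a subsequence and the infinite dimensionality really matter. The first half will follow at once from Lemma \ref{rr}; the second half is the part I expect to be delicate.

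For the distance estimate, fix a linear homeomorphism $\vp\colon V\to H$ and let $m>0$ be the constant of Lemma \ref{rr}. Put $s_k=d(p_k,\partial\Omega)$. Since $\Omega$ is open, a short segment argument gives $d(p_k,\partial\Omega)=d(p_k,V\setminus\Omega)$: the inclusion $\partial\Omega\subset V\setminus\Omega$ yields one inequality, and for the other, any $y\in V\setminus\Omega$ is joined to $p_k\in\Omega$ by a segment meeting $\partial\Omega$ at distance at most $\|p_k-y\|$. Hence $B_V(p_k,s_k)\subset\Omega$, and Lemma \ref{rr} applied with $q=p_k$, $s=s_k$ gives
$$
\sigma_\Omega(p_k)\ \ge\ \frac{s_k}{m^2\|\vp\|\,\|\vp^{-1}\|},
$$
so that $s_k\le m^2\|\vp\|\,\|\vp^{-1}\|\,\sigma_\Omega(p_k)\to 0$. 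Thus $d(p_k,\partial\Omega)\to 0$ for the entire sequence, and therefore along any subsequence we later extract.

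For the nearest-point assertion I would first record that $V$, being linearly homeomorphic to a Hilbert space, is reflexive, so that bounded sequences admit weakly convergent subsequences. Fixing $j$, I would build a minimising sequence of boundary points as first-exit points along rays from $p_j$: choosing $y\in V\setminus\Omega$ with $\|p_j-y\|$ close to $s_j$ and taking the first point of $\partial\Omega$ on the segment $[p_j,y]$ produces boundary points of the form $p_j+t\,u$ with $\|u\|=1$ and $t\downarrow s_j$, whose open segments back to $p_j$ stay inside $\Omega$. Extracting a weak limit $u$ of the unit directions yields a candidate $q_j=p_j+s_j u$ with $\|q_j-p_j\|\le s_j$ by weak lower semicontinuity of the norm.

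The hard part will be to show that this candidate really lies on $\partial\Omega$. A weak limit of boundary points need not stay in $\partial\Omega$, because $\partial\Omega$ is norm-closed but in general not weakly closed, and in the absence of convexity of $\Omega$ the usual parallelogram argument forcing a minimising sequence to converge in norm is unavailable. I would try to close this gap by exploiting the exit-point structure together with the smallness of $s_j$: since $s_j\to 0$, one can refine to a subsequence along which the minimising configurations concentrate near the boundary, and then play the open segments $(p_j,q)\subset\Omega$ against their endpoints in $V\setminus\Omega$ to pin the limit onto $\partial\Omega$. Ruling out that the weak limit slips into the interior is, to my mind, the crux of the lemma and the one place where genuinely infinite-dimensional reasoning, rather than mere compactness, is required.
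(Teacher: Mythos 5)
Your treatment of the first assertion is correct and is essentially the paper's argument run forwards instead of by contradiction: the identity $d(p_k,\partial\Omega)=d(p_k,V\setminus\Omega)$ gives $B_V(p_k,s_k)\subset\Omega$, and Lemma \ref{rr} then forces $s_k\le m^2\|\vp\|\,\|\vp^{-1}\|\,\sigma_\Omega(p_k)\to 0$. (The paper instead assumes $r_j\ge s>0$ along a subsequence and derives a contradiction from the same inequality; your version is cleaner and gives the conclusion for the whole sequence.)

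The second assertion is where your proposal stops short of a proof. Your candidate $q_j=p_j+s_j u$, with $u$ a weak limit of the unit directions $u_n$, need not lie on $\partial\Omega$ at all: weak limits of unit vectors can have norm strictly less than $1$ (an orthonormal sequence converges weakly to $0$), so $q_j$ may land in the open ball $B_V(p_j,s_j)\subset\Omega$, strictly inside the domain, and then it is not even a competitor for the nearest point. The closing paragraph of your sketch (``play the open segments against their endpoints to pin the limit onto $\partial\Omega$'') is a declaration of intent rather than an argument, so the existence of $q_j$ is not established. For comparison, the paper takes a minimising sequence $(q_n)\subset\partial\Omega$ with $\|q_n-p_j\|\to r_j$, extracts a weak limit $q_j$, asserts $q_j\in\partial\Omega$ ``by weak compactness of the boundary,'' and then sandwiches $r_j=\liminf_n\|q_n-p_j\|\ge\|q_j-p_j\|\ge r_j$, the last inequality using precisely that $q_j\in\partial\Omega$. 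So the step you identify as the crux is exactly the step the paper dispatches in one line; your scruple that $\partial\Omega$ is norm-closed but in general not weakly closed is legitimate (the weak closure of the unit sphere of an infinite-dimensional Hilbert space is the whole closed ball), and it bears on the paper's own appeal to weak compactness as much as on your sketch. Judged as a blind proof of the statement, however, your attempt leaves the nearest-point existence unproved, so the proposal is incomplete.
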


\begin{proof} Let $(p_k)$ be the given sequence satisfying
\begin{equation}\label{zero}
\lim_{k\rightarrow \infty} \sigma_\Omega(p_k) =0.
\end{equation}
Since the bounded domain $\Omega$ is relatively weakly compact in  $V$, there is a subsequence $(p_j)$ in
$\Omega$ converging weakly to some point $p\in \overline \Omega$. We do not know if the squeezing function
$\sigma_\Omega$ is weakly continuous on $\Omega$.

Let $r_j = d(p_j, \partial \Omega)$ denote the distance from $p_j$ to the boundary $\partial \Omega$.
We first show that $\lim_{j \rightarrow \infty} r_j = 0$. Otherwise, we may assume (by choosing a subsequence)
$$r_j \geq s, \quad {\rm for ~some} \quad s >0$$
for all $j$. For all $z\in \partial \Omega$, we have $\|z-p_j\| \geq r_j$. Observe that
$B_V(p_j, r_j) \subset \Omega$, for if there exists some $\omega \in B_V(p_j, r_j) \backslash \Omega$,
then we must have $\omega \notin \overline\Omega$. Therefore the (real) line joining $p_j$ and $\omega$
must intersect $\partial \Omega$ at a point $z_0$ say, which gives a contradiction that
$$r_j \leq \|z_0 - p_j\| \leq \| \omega - p_j\| <r_j.$$
By Lemma \ref{rr}, there exists $m>0$ such that
$$\sigma_\Omega(p_j) \geq \frac{r_j}{m^2\|\vp\|\|\vp^{-1}\|} \geq \frac{s}{m^2\|\vp\|\|\vp^{-1}\|} >0,$$
contradicting  $\lim_j \sigma_\Omega(p_j) =0$. Therefore we have established
$$r_j = d(p_j, \partial \Omega) \rightarrow 0 \quad {\rm as} \quad j \rightarrow \infty.$$

To show the second assertion, we make use of a result in  \cite[Theorem 3.2]{b}, which states that in a reflexive Banach space $V$,
if the complement $V \backslash C$ of a non-empty closed set $C$ in $V$ is convex, then
$C$ is almost proximinal, in other words,
there is a dense $G_\delta$
set $A$ in $V \backslash C$ such that for each $x\in A$, there is a point $z\in C$  satisfying
$$\|x-z\|= d(x, C).$$

The given Banach space $V$ is reflexive.
We apply the above result to the set $C= V\backslash \Omega$, which is almost proximinal.
By continuity of the squeezing function $\sigma_\Omega$, there is an open neighbourhood
$N_j$ of $p_j$ such that $\sigma_\Omega(x) < 2\sigma_\Omega (p_j)$ for all $x\in N_j$
and for each $j$.
By density of $A$, one can find $p_j'\in A\cap N_j$ for which there exists $q_j \in C $ satisfying
$$\|p_j' - q_j\| = d(p_j', C)  =d(p_j', V\backslash \Omega) \leq d(p_j', \partial \Omega)$$
where the last inequality holds because the boundary $\partial \Omega$ is contained in $V\backslash \Omega$.

If $q_j \notin \partial \Omega$, then $q_j \notin \overline \Omega$ since  $q_j \notin \Omega$.
Hence the line segment $\{p_j' + \alpha (q_j - p_j')
: 0\leq \alpha\leq 1\}$ joining $p_j'$ and $q_j$ must intersect the boundary $\partial \Omega$
at some point $\omega
=p_j' + \beta (q_j - p_j')\in \partial \Omega$ with $0<\beta < 1 $.
It follows that
$$  \|p_j'-q_j\| \leq d(p_j',\partial \Omega) \leq \|p_j'-\omega\| = \beta\|p_j'-q_j\| < \|p_j'-q_j\|$$
which is impossible. Hence we have $q_j \in \partial \Omega$ and $\|p_j'-q_j\|= d(p_j', \partial \Omega)$.

 Finally, $\sigma_\Omega(p_j') < 2\sigma_\Omega(p_j)$
for all $j$ implies $\lim_j \sigma_\Omega(p_j') =0$.
\end{proof}

To show that an HHR domain $D$ in a complex Banach space $V$ is pseudoconvex, we  show that $D$ is a domain of holomorphy,
as defined in \cite{muji}. In finite dimensions, a bounded domain $D$ is a domain of holomorphy
if $(D, C_D)$ is complete \cite[p.368]{ko1}. We first extend this useful result to infinite dimension.

\begin{lemma}\label{24}
Let $D$ be a bounded domain in a complex Banach space $V$. If $D$ is complete with respect to the Carath\'eodory
distance, then it is a domain of holomorphy.
\end{lemma}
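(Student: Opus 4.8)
The plan is to deduce that $D$ is a domain of holomorphy from its holomorphic convexity. Since $V$ is linearly isomorphic to a Hilbert space, it carries a Schauder basis, so by the Cartan--Thullen type results available in \cite{muji} it suffices to prove that $D$ is holomorphically convex, i.e. that for every closed set $K$ strictly contained in $D$ its holomorphic hull
$$\widehat K=\{z\in D:\ |f(z)|\le \sup_{K}|f|\ \text{ for every bounded holomorphic }f\text{ on }D\}$$
is again strictly contained in $D$. Fixing a base point $z_0\in K$, I would first confine $\widehat K$ inside a Carath\'eodory ball and then use completeness to keep that ball off $\partial D$.

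The core estimate is dimension-free. Put $R=\sup_{w\in K}C_D(z_0,w)$, which is finite because $C_D$ is equivalent to the norm on the closed, strictly contained set $K$ (see \cite{fv}). Recall that $C_D(z_0,z)=\sup\{\tanh^{-1}|f(z)|:\ f:D\to B_{\mathbb C},\ f(z_0)=0\}$, where $B_{\mathbb C}$ is the unit disc and we have used $C_{B_{\mathbb C}}(0,f(w))=\tanh^{-1}|f(w)|$. For $z\in\widehat K$ and any such $f$, the hull condition gives $|f(z)|\le\sup_K|f|=\tanh\big(\sup_{w\in K}\tanh^{-1}|f(w)|\big)\le\tanh R$ by the distance-decreasing property, hence $\tanh^{-1}|f(z)|\le R$. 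Taking the supremum over all such $f$ yields $C_D(z_0,z)\le R$, so
$$\widehat K\subset\{z\in D:\ C_D(z_0,z)\le R\}.$$

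The remaining, and main, step is to show that completeness forces this closed Carath\'eodory ball to be strictly contained in $D$; granting it, $\widehat K$ is strictly contained and holomorphic convexity follows. In finite dimensions this is automatic, since the ball is norm-bounded, hence relatively compact, and completeness prevents its closure from meeting $\partial D$. This is exactly where infinite-dimensionality bites, as norm-bounded sets are no longer relatively compact and $C_D$-bounded sequences need not be $C_D$-Cauchy. The hard part will therefore be a compactness-free argument: were the ball not strictly contained, I would extract a sequence $(z_k)$ in it with $d(z_k,\partial D)\to 0$ and, in the spirit of Lemma \ref{boundary} (using the reflexivity of $V$ and the weak compactness of bounded sets, together with the selection of nearest boundary points), produce from it a $C_D$-Cauchy sequence that cannot converge in $D$, contradicting completeness. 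Converting norm-approach to the boundary into a genuine $C_D$-Cauchy obstruction, despite the absence of compactness and the possible failure of weak continuity of $C_D$, is the crux; the equivalence of $C_D$ with the norm on strictly contained sets \cite{fv} will be the essential quantitative input.
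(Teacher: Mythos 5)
Your route (holomorphic convexity plus a Cartan--Thullen type theorem) is genuinely different from the paper's, and the estimate $\widehat K\subset\{z\in D: C_D(z_0,z)\le R\}$ is correct as far as it goes; but the argument has a real gap at exactly the step you flag as the crux, and that step is not a technicality one can expect to fill in. Knowing that $\widehat K$ sits inside a closed Carath\'eodory ball does not let you conclude it is strictly contained in $D$: if $(z_k)$ is a sequence in that ball with $d(z_k,\partial D)\to 0$, the bound $C_D(z_0,z_k)\le R$ controls only distances to the base point, not the mutual distances $C_D(z_k,z_m)$, so there is no reason for $(z_k)$ or any subsequence to be $C_D$-Cauchy, and Cauchy completeness is never contradicted. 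Weak compactness and nearest-point selection in the spirit of Lemma \ref{boundary} give a weak limit but, as you yourself note, no control on $C_D$. What you actually need is that $C_D$-Cauchy-completeness forces closed $C_D$-balls to keep a positive distance from $\partial D$, i.e.\ a form of $C_D$-finite compactness; whether $C_D$-completeness implies $C_D$-finite compactness is a delicate question that is not known to hold in general even for bounded domains in $\Cdb^n$ (the Carath\'eodory distance is not an inner distance, so Hopf--Rinow does not apply). In particular your remark that the finite dimensional case is ``automatic'' is not justified: relative norm-compactness of the ball produces a norm-convergent sequence tending to the boundary, not a $C_D$-Cauchy one.

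There are two further problems with the reduction itself. The lemma is stated for an arbitrary complex Banach space $V$, whereas you assume at the outset that $V$ is an isomorph of a Hilbert space (hence has a basis); that hypothesis is only available later, when the lemma is applied to HHR domains. And the implication ``holomorphically convex $\Rightarrow$ domain of holomorphy'' is itself a deep theorem in infinite dimensions, passing through the Levi problem and known only for restricted classes of Banach spaces (e.g.\ separable ones with the bounded approximation property), so even if the convexity step worked you would be importing far heavier machinery than the statement requires. The paper's proof avoids all of this by arguing directly from the definition: assuming $D$ is not a domain of holomorphy, it produces a connected open $U\not\subset D$ to which every holomorphic function on $D$ extends, shows via the maximum principle that functions with $|f|<1$ extend with the same bound, deduces $C_D\le C_U$ on a suitable component of $U\cap D$, and then observes that a sequence in that component norm-converging to a point of $\partial D\cap U$ is $C_D$-Cauchy yet cannot $C_D$-converge in $D$ (since $C_D$ dominates a multiple of the norm locally), contradicting completeness. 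I would recommend abandoning the Cartan--Thullen route and arguing directly along those lines.
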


\begin{proof}
Suppose that $D$ is not a domain of holomorphy.  Then by definition, there are
open subsets $U, W$ of $V$ satisfying the following conditions:
\begin{enumerate}
\item[(i)] $U$ is connected.
\item[(ii)] $D \cap U \neq \emptyset$.
\item[(iii)] $U \not\subset D$.
\item[(iv)] $\emptyset \neq W \subset D \cap U$.
\item[(v)]  For each holomorphic function $f\colon D \to \mathbb{C}$, there is a holomorphic
function $\tilde f\colon U \to \mathbb{C}$ such that \(\tilde f (z) = f(z)\) for each
$z \in W$.
\end{enumerate}

We deduce a contradiction. Without loss of generality, we may assume that $U$ is bounded.
Let \(W_0\) be a connected component of $U \cap D$ with $W_0 \cap W \neq \emptyset$.
Then we have $$\partial W_0 \cap \partial D \cap U \neq \emptyset.$$ Indeed, if this is not
the case, then for each $p\in U \backslash W_0 \neq \emptyset$, either $p \notin \partial W_0$ or
$p\notin \partial D$. If $p \notin \partial W_0$, then there is a norm-open ball $B_p \subset U$ containing
$p$ such that either $B_p \cap W_0 = \emptyset$ or $B_p \cap W_0^c = \emptyset$. Since $p \notin W_0$,
we must have $B_p\cap W_0 =\emptyset$. On the other hand, if $p\notin \partial D$, then there is an
open ball $B_p$ containing $p$ such that $B_p \cap D = \emptyset$ or $B_p \cap D^c = \emptyset$.
In either case, we have $B_p \cap W_0 = \emptyset$ since, if $B_p \subset D$, then the connected ball
$B_p$ resides in a connected component $W_1$ of $U\cap D$ and we must have $W_1 \neq W_0$ as $p \notin W_0$.
Now the disconnection
$$U = W_0 \cup \left( \bigcup_{p\in U\backslash W_0} B_p\right)$$
contradicts the connectedness of $U$.

Pick a point $p \in \partial W_0\cap \partial D \cap U$ and
let $(z_n)$ be a sequence in $W_0$ norm-converging to $p$. By omitting the first few terms of the sequence if necessary,
we may assume that $(z_n)$ and $p$ are contained in a closed ball strictly contained in $U$. It follows that
$(z_n)$ also converges to $p$ with respect to the Carath\'eodory distance $C_U$.

By condition (v) above,
each holomorphic function $f\colon D \to \Cdb$ with $|f(z)|<1$
extends to a holomorphic function  $\tilde f: U \rightarrow \mathbb{C}$, which coincides with $f$
on the connected component $W_0$ by the identity principle. Moreover,
if $|\tilde f(u)|>1$ for some $u\in U$, then we deduce a contradiction by considering the extension  to
$U$ of the function $\frac{1}{f- \tilde f(u)}$ on $D$. Hence we must have $|\tilde f(u)| \leq 1$ for all $u\in U$
and, by the maximum principle, $|\tilde f(u)| <1$ for all $u\in U$. It follows that
\[
C_D(z_n,z_m)   \le  C_U(z_n,z_m)
\]
for $n,m =1,2,\ldots$, where $C_U (z_n,z_m)$ converges to $C_U (p,p)=0$ as $n,m \to \infty$. Hence
 $(z_n)$ is a Cauchy sequence in $D$ with respect to $C_D$.
However, $(z_n)$ does not converge in $D$, with respect to $C_D$. Indeed, if $(z_n)$
$C_D$-converges to some point $z\in D$ say, then
by \cite[Lemma 2.1]{fv}, there is a constant $\alpha >0$ such that
$$\alpha \|z_n - z\| \leq C_D(z_n,z) \rightarrow 0 \quad {\rm as} \quad n \rightarrow \infty$$
which is impossible since $(z_n)$ does not converge in $D$ with respect to the
norm-distance.  This shows   that
$(D, C_D)$ fails to be complete, which is a contradiction. We therefore conclude that $D$ is
a domain of holomorphy.
\end{proof}

We now extend the result in \cite[Lemma 2]{yeung} to the following infinite dimensional setting.

\begin{theorem}\label{pseudo}
Let $D$ be an HHR domain in a complex Banach space $V$.
Then $D$ is a domain of holomorphy.
\end{theorem}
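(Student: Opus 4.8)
The plan is to obtain this from Lemma~\ref{24}: it suffices to prove that an HHR domain $D$ is complete with respect to the Carath\'eodory distance $C_D$, and then that lemma immediately yields that $D$ is a domain of holomorphy. Since $\hat\sigma_D>0$, I would first fix a constant $c$ with $0<c<\hat\sigma_D$, so that for \emph{every} $p\in D$ there is a holomorphic embedding $f:D\rightarrow B_H$ with $f(p)=0$ and $cB_H\subset f(D)\subset B_H$. The whole proof then reduces to showing that an arbitrary $C_D$-Cauchy sequence $(z_m)$ in $D$ converges in $(D,C_D)$.

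The key idea is to transport the sequence into one fixed model ball and trap it strictly inside, where $C_{B_H}$ and the norm are comparable. Using the Cauchy property, I would fix an index $n$ with $\varepsilon:=\sup_{m\ge n}C_D(z_n,z_m)<\tanh^{-1}c$ and choose an embedding $f:D\rightarrow B_H$ as above centred at $z_n$, so $f(z_n)=0$. As $f$ is a biholomorphism onto $f(D)\subset B_H$, it is a $C_D$-isometry onto $(f(D),C_{f(D)})$, and the distance-decreasing property together with $C_{B_H}(0,w)=\tanh^{-1}\|w\|$ gives $\tanh^{-1}\|f(z_m)\|=C_{B_H}(0,f(z_m))\le C_{f(D)}(0,f(z_m))=C_D(z_n,z_m)\le\varepsilon$ for all $m\ge n$. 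Hence $\|f(z_m)\|\le\tanh\varepsilon<c<1$, so $(f(z_m))_{m\ge n}$ lies in the closed ball of radius $\tanh\varepsilon$, which is strictly contained in $B_H$. Next, since $(z_m)$ is $C_D$-Cauchy and $f$ is a $C_D$-isometry, $(f(z_m))$ is $C_{f(D)}$-Cauchy, hence $C_{B_H}$-Cauchy because the inclusion $f(D)\subset B_H$ forces $C_{B_H}\le C_{f(D)}$. As the sequence remains in a closed ball strictly contained in $B_H$, Theorem~IV.2.2 of \cite{fv} makes $C_{B_H}$ equivalent to the norm there, so $(f(z_m))$ is norm-Cauchy and converges in norm to some $\zeta$ with $\|\zeta\|\le\tanh\varepsilon<c$, whence $\zeta\in cB_H\subset f(D)$. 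Applying the continuous inverse $f^{-1}$ (holomorphic on $f(D)$) gives $z_m=f^{-1}(f(z_m))\rightarrow f^{-1}(\zeta)=:z_0\in D$ in norm. Finally, since $z_0$ is an interior point, $C_D$ is again equivalent to the norm on a closed ball about $z_0$ strictly contained in $D$ by \cite[Theorem IV.2.2]{fv}, so $C_D(z_m,z_0)\rightarrow0$. Thus $(D,C_D)$ is complete and Lemma~\ref{24} finishes the argument.

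The step I expect to be the main obstacle is exactly this passage from $C_{B_H}$-Cauchyness to norm convergence inside the model ball, together with the guarantee that the limit stays in the interior of $D$ rather than escaping to $\partial D$. In finite dimensions one would invoke normal families and the continuity of $\sigma_D$, but both tools are unavailable here: there is no Montel theorem, and, as noted after Proposition~\ref{con}, the weak continuity of $\sigma_D$ is not known. The uniform squeezing bound $\hat\sigma_D>0$ is what resolves this, since it confines the transferred tail $(f(z_m))$ to a ball strictly interior to $B_H$, precisely the region in which \cite[Theorem IV.2.2]{fv} makes the Carath\'eodory distance comparable to the norm; pulling back through the single fixed embedding $f$ then locates the limit inside $D$ without any compactness argument.
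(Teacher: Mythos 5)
Your proposal is correct and follows essentially the same route as the paper: reduce to Lemma~\ref{24}, then prove Carath\'eodory completeness by fixing one squeezing embedding $f$ centred at a point of the Cauchy tail, using the distance-decreasing property and $C_{B_H}(0,w)=\tanh^{-1}\|w\|$ to trap the image tail strictly inside the ball $cB_H\subset f(D)$, and pulling the limit back through $f^{-1}$. The only (immaterial) difference is that the paper obtains the limit via Carath\'eodory-completeness of $B_H$ and a subsequence, whereas you use norm-completeness of $H$ together with the local equivalence of $C_{B_H}$ with the norm on balls strictly contained in $B_H$.
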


\begin{proof}
In view of Lemma \ref{24}, we need only show that the Carath\'eodory distance in $D$ is
complete.

By the hypothesis, the squeezing constant $\hat \sigma_D$ takes the value,
say, $r\in (0,1]$.
Let $(x_n)$ be a $C_D$-Cauchy sequence in $D$. We show that $(x_n)$ $C_D$-converges.

Let $\varepsilon = \tanh^{-1} \frac{r}2$. Then there is a number $N>0$
such that
\(
C_D (x_n, x_N) < \varepsilon ~{\rm for}~  n >N.
\)

Let $f\colon D \to B_H$ be a holomorphic embedding into a Hilbert ball $B_H$ with
\( f(x_N)=0\) and \(B_H(0, \frac{3r}4) \subset f(D)\).  Then the inverse
holomorphic map $g:=f^{-1}\colon f(D) \to D$ is well-defined on the
ball $B_H (0, \frac{3r}4)$.

We have, for $n >N$,
\[
C_{B_H} (0,f(x_n)) = C_{B_H}(f(x_N), f(x_n)) \leq C_D (x_N, x_n) < \varepsilon = \tanh^{-1} \frac{r}2
\]
as well as
\[
\lim_{n, m \to \infty} C_{B_H} (f(x_m), f(x_n)) \le \lim_{n, m \to \infty}C_D(x_m, x_n) = 0.
\]
Since $B_H$ is complete in  the Carath\'eodory distance, there is a subsequence $(x_{n_k})$ of $(x_n)$
such that $f(x_{n_k})$ converges to some
$y_0 \in B_H$ with respect to $C_{B_H}$, and $C_{B_H}(0, y_0) \leq \varepsilon$.  Hence, as noted previously, we have
 $y_0 \in \overline{B}_H (0,\frac{r}2) \subset
B_H (0,\frac{3r}4) \subset f(D)$ and also,
\begin{align*}
\lim_{k \to \infty} C_D (x_{n_k}, g(y_0))
& \le \lim_{k \to \infty} C_D (g(y_{n_k}), g(y_0)) \\
& \le \lim_{k \to \infty}\frac{4}{3r} C_{B_H} (f(g(y_{n_k})), f(g(y_0))) \\
& = \lim_{k \to \infty}  \frac{4}{3r} C_{B_H} (y_{n_k}, y_0) = 0.
\end{align*}
It follows that the sequence $(x_n)$ converges to $g(y_0)$ in $D$ with respect to $C_D$ and the
proof is complete.
\end{proof}

In the remaining sections, we will show that various infinite dimensional domains are HHR, including the
finite-rank bounded symmetric domains and the class of strongly convex domains in Hilbert spaces.

\section{Bounded symmetric domains}

In this section, we discuss infinite dimensional bounded symmetric domains and some basic
results which are needed later. We will make use of the underlying Jordan algebraic structures
of a bounded symmetric domain to study the squeezing function.

Let $D$ be a bounded symmetric domain in a complex Banach space $V$.
Then $V$ carries the structure of a {\it JB*-triple},
by Kaup's Riemann mapping theorem \cite{kaup2} (see also
\cite[Theorem 2.5.26]{book}).
More precisely, $V$ is equipped with an {\it equivalent norm} $\|\cdot\|$
and a continuous {\it Jordan triple product}
$$
\{\cdot,\cdot,\cdot\} : V \times V \times V \rightarrow V
$$
which is linear in the outer variables but conjugate linear in the middle one,
and satisfies the following conditions:
\begin{enumerate}
\item[(\romannumeral 1)] $\{u,v, \{x,y,z\}\}
= \{\{u,v,x\},y,z\} - \{x,\{v,u,y\},z\} + \{x,y,\{u,v,z\}\}$;
\item[(\romannumeral 2)] $z\bo z : v\in V \mapsto \{z,z,v\} \in V$ is
a hermitian operator on $V$, that is, $\|\exp it(z\bo z)\|=1$ for all
$t \in \mathbb{R}$;
\item[(\romannumeral 3)]
$z\bo z$ has non-negative spectrum; \item[(iv)] $\|z\bo z\|=\|z\|^2$
\end{enumerate}
for all $u,v,x,y,z \in V$.  In this case, $D$ is biholomorphic to the open
unit ball $\{v\in V: \|v\|<1\}$ of the JB*-triple $(V, \|\cdot\|)$ and, we say that
$D$ is realised as the open unit ball of the JB*-triple $(V, \|\cdot\|)$.

The {\it rank} of $D$ can  be defined in terms of the Jordan structures of $V$.
A closed subspace $E$ of a JB*-triple $V$ is called a {\it subtriple} if
$a,b,c \in E$ implies $\{a,b,c\} \in E$.  For each $a\in V$, let $V(a)$ be
the smallest subtriple of $V$ containing $a$. For $V\neq \{0\}$,
the  {\it rank} of $V$ is defined to be
$$
r(V) = \sup \{ \dim V(a): a\in V\} \in \mathbb{N}\cup \{\infty\}.
$$
The {\it rank} of $D$ is defined to be $r(V)$.
A (nonzero) JB*-triple $V$ has {\it finite rank}, that is, $r(V) < \infty$ if,
and only if, $V$ is a reflexive Banach space (see \cite[Proposition 3.2]{kaup1}).
In particular, if there is a holomorphic embedding of $D$ into a Hilbert
ball, then $V$ is linearly homeomorphic to a Hilbert space and hence $D$
must be of finite rank.

A finite-rank JB*-triple can be {\it coordinatised} by elements called
{\it tripotents}. An element $e$ in a JB*-triple $V$ is called a {\it tripotent} if
$\{e,e,e\}=e$.
A nonzero tripotent $e$ is called {\it minimal} if $\{e, V, e\} = \mathbb{C}\,e$.
The Banach subspace $K_0(V)$ of $V$ generated by the minimal tripotents
has been studied in \cite{cb}.
Two elements $a, b \in V$ are said to be mutually (triple) {\it orthogonal} if
$a \bo b = b \bo a =0$, where $a\bo b$ denotes the
continuous linear operator
\[ a \bo b : x\in V \mapsto \{a,b,x\} \in V.\]
In fact, it can be shown that
$a\bo b=0$ is equivalent to $b\bo a=0$ \cite[Lemma 1.2.32]{book}.
For a finite-rank JB*-tiple $V$,  its rank $r(V)$ is the (unique) cardinality of a
maximal family of mutually orthogonal minimal tripotents in $V$, which is an
$\ell^{\infty}$-sum of a finite number of finite-rank Cartan factors.
There are six types of finite-rank Cartan factors, which
can be infinite dimensional, listed below.
$$
\begin{aligned}
\text{\rm Type I} &\q {L}(\mathbb{C}^\ell,K)\q (\ell =1,2, \ldots),~ {\rm rank}\,= \ell\leq \dim K,\q \\
\text{\rm Type II} &\q
\{z\in {L}(\mathbb{C}^\ell,\mathbb{C}^\ell): z^t=-z\}\q (\ell = 5,6,  \ldots),~ {\rm rank}\,=\left[\frac{\ell}{2}\right]\\
\text{\rm Type III} &\q  \{z\in
{L}(\mathbb{C}^\ell,\mathbb{C}^\ell): z^t=z\} \q (\ell = 2,3,  \ldots),~ {\rm rank}\,= \ell\\
\text{\rm Type IV} &\q \text{\rm spin
factor,}~ {\rm rank}\,= 2\\ \text{\rm Type V} &\q M_{1,2}(\mathbb{O}) = \{1\times
2 ~\,\text{\rm matrices over the Cayley algebra}\; \mathbb{O}\},~ {\rm rank}\,= 2\\
\text{\rm Type VI} &\q H_3(\mathbb{O})=\{3\times 3 ~\,\text{\rm
hermitian matrices over}\; \mathbb{O}\},~ {\rm rank}\,= 3
\end{aligned}
$$
where ${L}(\mathbb{C}^\ell,K)$ is the JB*-triple of linear
operators from $\mathbb{C}^\ell$ to a Hilbert space $K$ and
$z^t$ denotes
the transpose of $z$ in the JB*-triple ${L}(\mathbb{C}^\ell,\mathbb{C}^\ell) $
of $\ell \times \ell$ complex matrices. The Jordan triple product in the first
three types is given by
$$
\{x,y,z\} = \frac{1}{2}(xy^*z + zy^*x)
$$
where $y^*$ denotes the adjoint of $y$.

A {\it spin
factor} is a JB*-triple $V$  equipped with a complete inner
product $\langle \cdot,\cdot \rangle$ and a conjugation
$* : V \rightarrow V$ satisfying
 $$
 \langle x^*, y^*\rangle = \langle y, x\rangle \quad {\rm and} \q  \{x,y,z\} =\frac
12\,\big(\langle x,y\rangle z + \langle z,y\rangle x - \langle x,z^*\rangle y^*\big).
$$
The Cartan factor $H_3(\mathbb{O})$ is a Jordan algebra with product
\[x\cdot y= \frac{1}{2}(xy+yx)\]
where the product on the right-hand side is the usual matrix product.
The Jordan triple product of $H_3(\mathbb{O})$ is given by
\[\{x,y,z\} = (x\cdot y)\cdot z + x\cdot(y\cdot z) -y \cdot(x\cdot z).\]
The Cartan factor $M_{1,2}(\mathbb{O})$ can be identified as a subtriple of
$H_3(\mathbb{O})$.

The only possible infinite dimensional finite-rank Cartan factors are
the spin factors and $L(\mathbb{C}^{\ell},K)$, with $\dim K = \infty >\ell$,
where a spin factor has rank 2 and $L(\mathbb{C}^\ell,K)$ has rank $\ell$.
The open unit balls of the finite dimensional Cartan factors are exactly the
six types of irreducible bounded symmetric
domains in \'E. Cartan's classification. The last two types are the
exceptional domains. This explains the etymology of {\it Cartan factor}.
The open unit ball of a spin factor is known as a {\it Lie ball}.

For a finite-rank JB*-triple $V$ with rank $\ell$, each element $z\in V$ has
a {\it spectral decomposition}
$$
z = \alpha_1 e_1 + \cdots + \alpha_{\ell} e_{\ell}
$$
where $e_1, \ldots, e_\ell$ are mutually (triple) orthogonal
minimal tripotents and
$\alpha_1 \geq\cdots \geq \alpha_{\ell}\geq 0$ with
$\alpha_1 =\|z\|$, also called the {\it spectral norm} of $z$.

\section{Squeezing functions of bounded symmetric domains}

In finite dimensions, it is well-known that a bounded symmetric
domain of rank $\ell$ contains a polydisc of dimension $\ell$ as a
totally geodesic submanifold \cite[p.41]{ko}. To see that this is also the
case for infinite dimensional bounded symmetric domains of finite rank,
we only need to consider the {\it irreducible} ones. As remarked previously,
there are only two classes of such domains, namely,
the Lie balls, which are of rank $2$, and the type I domains of rank $\ell$, which can be
realised as the open unit ball of the Banach space $L(\mathbb{C}^\ell, K)$
of bounded linear operators between Hilbert spaces $\mathbb{C}^\ell$
and $K$, with  $\ell \leq \dim K \leq  \infty$ and $\ell <\infty$.

Given $\ell  < \infty$, every operator $T \in L(\mathbb{C}^\ell,K)$ is
a Hilbert-Schmidt operator in the Hilbert-Schmidt norm
$$
\|T\|_2 = (\sum _{k=1}^\ell \|Te_k\|^2)^{1/2}
$$
satisfying $\|T\| \leq \|T\|_2 \leq \sqrt\ell \|T\|$,
where $\{e_1, \ldots, e_\ell\}$ is the standard orthonormal basis
in $\mathbb{C}^\ell$.

Let  $\overline{\mathbb{D}}$ be the closure of $\mathbb{D}=\{z\in \mathbb{C}: |z| <1\}$ and
$\overline D$
the closure of the open unit ball
$$
D = \{T\in L(\mathbb{C}^\ell,K): \|T\| < 1\}.
$$
Fix orthonormal basis vectors $u_{\alpha_1}, \ldots, u_{\alpha_\ell}$
from an orthonormal basis $\{u_\alpha\}$ in $K$.
Then the continuous map
$\varphi : \overline{\mathbb{D}}\times \cdots \times \overline{\mathbb{D}}
\rightarrow \overline D$, defined by
\begin{equation}
\label{vp}
\varphi(  z_1, \ldots z_\ell)
= \sum _{k=1} ^\ell z_k(e_k \otimes u_{\alpha_k})
\qquad (z_1, \ldots, z_\ell) \in \overline{\mathbb{D}}^\ell,
\end{equation}
restricts to an injective holomorphic map
$$
\varphi : \mathbb{D}\times \cdots \times \mathbb{D} \rightarrow D
$$
with $\varphi(0, \ldots, 0) = 0$, where
$e_k \otimes u_{\alpha_k} : \mathbb{C}^\ell \rightarrow K$ is the
rank-one operator
$$
e_k \otimes u_{\alpha_k}(h) = \langle h, e_k\rangle u_{\alpha_k}
\qquad (h \in \mathbb{C}^\ell)
$$
with $\|e_k \otimes u_{\alpha_k}\|= \|e_k \otimes u_{\alpha_k}\|_2 =1$.
This also implies that $\varphi$ maps the boundary
$\partial \mathbb{D}^\ell$ of $\mathbb{D}^\ell$ into the boundary
$\partial D =\{T\in L(\mathbb{C}^\ell,K): \|T\|=1\}$ .

Let $D$ be the open unit ball of a spin factor $V$, which is of rank $2$.
Let $e_1$ and $e_2$ be two mutually (triple) orthogonal minimal tripotents
in $V$. Then we have $\|\lambda e_1 + \mu e_2\| = \max\{|\lambda|, |\mu|\}$
for $\lambda, \mu \in \mathbb{C}$ \cite[Corollary 3.1.21]{book}. Hence one can define a continuous map
\begin{equation}
\label{vpl}
\varphi : (z_1,z_2) \in \overline{\mathbb{D}}^2 \mapsto z_1e_1+ z_2e_2 \in \overline D
\end{equation}
 which restricts to an injective holomorphic
map from $\mathbb{D}^2$ to $D$ satisfying $\varphi (0)=0$ and
$\varphi (\partial \mathbb{D}^2) \subset \partial D$.

Given a Hilbert space $H$, a holomorphic map
$f: \mathbb{D}^n \rightarrow H$ admits a power series representation
in terms of homogeneous polynomials from $\mathbb{C}^n$ to $H$
(cf.\,\cite[p.65]{book}). A {\it homogeneous polynomial $p$ of degree $d$}
from $\mathbb{C}^n$ to $H$ is given by
$$
p(z_1, \ldots, z_n) = P(\, (z_1, \ldots, z_n), \ldots,(z_1, \ldots, z_n)\,) \in H,
\qquad (z_1, \ldots, z_n) \in \mathbb{C}^n
$$
where $P :\underbrace{\mathbb{C}^n \times \cdots \times
\mathbb{C}^n}_{d\mbox{-}times} \rightarrow H$ is a $d$-linear map.
Let $\{e_\alpha\}$ be an orthonormal basis in $H$. We can write
$$
p(z_1, \ldots, z_n) = \sum_\alpha \mathfrak{q}_\alpha(z_1, \ldots, z_n)e_\alpha
$$
where $\mathfrak{q}_\alpha(z_1, \ldots, z_n)$ is a homogeneous
polynomial of degree $d$ in $n$ complex variables $z_1, \ldots, z_n$
and has the form
$$
\mathfrak{q}_\alpha(z_1, \ldots, z_n)
= \sum_{j_1+\cdots+j_n=d} c_{\alpha;\, j_1,\ldots,j_n} z_1^{j_1}\cdots z_n^{j_n}
\qquad (c_{\alpha;\, j_1,\ldots,j_n}\in \mathbb{C}).
$$
A holomorphic map $f: \mathbb{D}^n \rightarrow H$ has a power series
representation
$$
f(z_1, \ldots, z_n) = f(0) + \sum_{d=1}^\infty p^d(z_1, \ldots, z_n),
\qquad (z_1, \ldots, z_n)  \in \mathbb{D}^n
$$
where $p^d$ is a homogeneous polynomial of degree $d$ from
$\mathbb{C}^n$ to $H$ and has the from
\begin{equation}
\label{poly}
p^d(z_1, \ldots, z_n) =\sum_\alpha \sum_{j_1+\cdots+j_n=d}
c^d_{\alpha;\, j_1,\ldots,j_n} z_1^{j_1}\cdots z_n^{j_n} e_\alpha
\qquad (c^d_{\alpha;\, j_1,\ldots,j_n}\in \mathbb{C}).
\end{equation}

Let $h: D \rightarrow D'$ be a biholomorphic map between two {\it open
unit balls} $D, D'$ of Banach spaces $V$ and $V'$ respectively. If $h(0)=0$, then it
follows from Cartan's uniqueness theorem that $h$ is the restriction of the derivative
$h'(0): V \rightarrow V'$, which is a linear isometry (cf.\,\cite[Corollary 6]{harris}
and \cite{ku}). In particular, $h$ extends to a continuous map
$\bar h: \bar D \rightarrow \overline D'$ between the closures $\overline D$
and $\overline D'$, where $\bar h = h'(0)|_{\bar D}$. Moreover, $\bar h(\partial D)= \partial D'$.

Let $D$ be a bounded symmetric domain, realised as the open unit ball of
a JB*-triple $V$.  Given a holomorphic embedding $f: D \rightarrow B_H$ of
$D$ into a Hilbert ball $B_H$, the image $f(D)$ is a bounded symmetric domain
and hence there is an equivalent norm $\|\cdot\|_\infty$ on $H$ such that
$(H, \|\cdot\|_\infty)$ is a JB*-triple and $f(D)$ identifies
(via a biholomorphic map) as the open unit ball of
$(H, \|\cdot\|_\infty)$ (cf.\,\cite[Theorem 2.5.26]{book}).
If $f(0)=0$, then the previous remark implies that $f$ extends to a continuous
map $\bar f$, which maps $\partial D$ onto the boundary $\partial f(D)$ of
the domain $f(D)$.

The following lemma is a simple infinite dimensional extension of
Alexander's result in \cite[Proposition 1]{a} (see also \cite[Lemma 1]{k1}).

\begin{lemma}\label{rho}
Let $D$ be a bounded domain with boundary $\partial D$ and $B$ a
Hilbert ball such that the following two continuous maps
$$
\overline{\mathbb{D}}^{\ell}~~ ^{\underrightarrow{\varphi}}
~~ \overline D~~ ^{\underrightarrow{f} }~~ \overline B
$$
on the closures restrict  to holomorphic maps
$$
\mathbb{D}^{\ell}~~ ^{\underrightarrow{\varphi}}
~~ D~~ ^{\underrightarrow{f} }~~ B
$$
with open image $f(D)$, satisfying  $\varphi(\partial \mathbb{D}^\ell) \subset \partial D$
and $f(\partial D) \subset \partial f(D)$.
If $\rho B \subset f(D)$ for some $\rho >0$, then $\ell \rho^2 \leq 1$.
\end{lemma}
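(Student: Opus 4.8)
The plan is to pass to the composite $F = f \circ \varphi \colon \overline{\mathbb{D}}^{\ell} \to \overline{B}$, which by hypothesis is continuous on the closed polydisc, holomorphic on $\mathbb{D}^\ell$, and carries $\mathbb{D}^\ell$ into $B$. The two boundary conditions chain together to give $F(\partial\mathbb{D}^\ell) = f(\varphi(\partial\mathbb{D}^\ell)) \subset f(\partial D) \subset \partial f(D)$. Since $f(D)$ is open and contains $\rho B$, no point of $\partial f(D)$ can lie in $\rho B$, and hence $\|F(\zeta)\| \ge \rho$ for every $\zeta$ in the topological boundary $\partial\mathbb{D}^\ell$, in particular on each coordinate circle. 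Writing $F(z) = \sum_{m} c_m z^m$ with $m \in \Nzero^\ell$ and $c_m \in H$ for the power series of $F$ (as in the expansion set up above), I record the normalization $F(0) = c_0 = 0$, which holds because $\varphi(0)=0$ in the explicit constructions preceding the lemma and $f(0)=0$ for the squeezing embeddings to which it is applied. This vanishing of the constant term is the one hypothesis that must genuinely be in force: it is what allows the counting below to close up exactly.

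The argument then rests on playing a single global upper bound against $\ell$ one-dimensional lower bounds. For the upper bound, since $\|F(z)\|<1$ on $\mathbb{D}^\ell$, Parseval over the torus of radius $r$ gives $\sum_m \|c_m\|^2 r^{2|m|} = (2\pi)^{-\ell}\int_{\mathbb{T}^\ell}\|F(r\zeta)\|^2\,d\theta \le 1$, and letting $r \uparrow 1$ (monotone convergence) yields $\sum_m \|c_m\|^2 \le 1$. For the lower bounds, I would fix $k$ and restrict $F$ to the $k$-th axis: the slice $g_k(w) = F(0,\dots,w,\dots,0)$ is continuous on $\overline{\mathbb{D}}$, holomorphic on $\mathbb{D}$, with $g_k(e^{i\theta})\in\partial f(D)$, so that $\|g_k(e^{i\theta})\|\ge\rho$. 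One-variable Parseval then gives $\rho^2 \le (2\pi)^{-1}\int_0^{2\pi}\|g_k(e^{i\theta})\|^2\,d\theta = \sum_{j\ge0}\|c_{j\mathbf{e}_k}\|^2$, and because $c_0 = F(0)=0$ the $j=0$ term drops, leaving $\rho^2 \le \sum_{j\ge1}\|c_{j\mathbf{e}_k}\|^2$.

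Summing this over $k = 1, \dots, \ell$ gives $\ell\rho^2 \le \sum_{k=1}^{\ell}\sum_{j\ge1}\|c_{j\mathbf{e}_k}\|^2$. The decisive observation is that the multi-indices $j\mathbf{e}_k$ occurring here, namely the pure powers in a single variable with $j\ge1$, are pairwise distinct across all $k$ and $j$, so this double sum is a subsum of $\sum_m\|c_m\|^2$. Combining with the global upper bound yields $\ell\rho^2 \le \sum_{k,j}\|c_{j\mathbf{e}_k}\|^2 \le \sum_m\|c_m\|^2 \le 1$, which is the assertion. I expect the main obstacle to be conceptual rather than computational: one must see that the factor $\ell$ is to be harvested by slicing along the $\ell$ coordinate axes, thereby isolating pure-power coefficients that occupy disjoint Fourier slots, rather than from a single torus average, which only delivers $\rho^2\le 1$. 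The remaining care points are the boundary-value and Parseval bookkeeping for the $H$-valued slices $g_k$, and the vanishing $F(0)=0$, without which the common constant term would contaminate each of the $\ell$ lower bounds and destroy the clean count.
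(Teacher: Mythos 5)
Your proof is correct and follows essentially the same route as the paper's: expand $F=f\circ\varphi$ in a power series, use one-variable Parseval on each coordinate-axis slice to extract $\ell$ pairwise disjoint packets of Fourier coefficients each of total mass at least $\rho^2$, and dominate their sum by $1$ via Parseval over the full torus. Your explicit flagging of the normalization $F(0)=0$ is apt: the paper uses it tacitly by writing the expansion of $f\circ\varphi$ with no constant term, and it does hold in every application of the lemma.
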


\begin{proof}
Let $\{e_\alpha\}$ be an orthonormal basis in the Hilbert space containing
the ball $B$.  By (\ref{poly}), the holomorphic map $f \circ \varphi$ on
$\mathbb{D}^\ell$ has a power series representation
\[
f\circ \varphi (z_1, \ldots, z_{\ell}) =  \sum_{d=1}^\infty p^d(z_1, \ldots, z_{\ell})
\]
where $p^d(z_1, \ldots, z_\ell) $ is a $d$-homogeneous polynomial of the form
$$
p^d(z_1, \ldots, z_\ell) =\sum_\alpha \sum_{j_1+\cdots+j_\ell=d}
c^d_{\alpha;\, j_1,\ldots,j_\ell} z_1^{j_1}\cdots z_\ell^{j_\ell} e_\alpha
\qquad (c^d_{\alpha;\, j_1,\ldots,j_\ell}\in \mathbb{C}).
$$

Since $\rho B \subset f(D)$, we have $\|f(w)\| \geq \rho$ for each
$w\in \partial D$.  Noting that
$f \circ \varphi(\partial \mathbb{D}^\ell) \subset \partial f(D)$, we deduce
\begin{eqnarray*}
\rho^2
&\leq&  \frac{1}{2\pi} \int_0^{2\pi}\|f\circ \varphi(0, \ldots,
e^{i\theta_j}, 0, \ldots, 0)\|^2 d\theta_j\\
&=&  \frac{1}{2\pi}\lim_{r \rightarrow 1}
\int_0^{2\pi}\|f\circ \varphi(0, \ldots,re^{i\theta_j}, 0, \ldots, 0)\|^2 d\theta_j\\
& =& \frac{1}{2\pi}\lim_{r \rightarrow 1} \int_0^{2\pi}\sum_\alpha\left|
\sum_d c^d_{\alpha;\, 0,\ldots, 0,d,0,\ldots,0} r^d e^{id\theta_j}\right|^2d\theta_j\\
&=& \frac{1}{2\pi}\lim_{r \rightarrow 1}\sum_\alpha \int_0^{2\pi}\left|\sum_d
 c^d_{\alpha;\,  0,\ldots, 0,d,0,\ldots,0} r^de^{id\theta_j}\right|^2d\theta_j\\
&=& \lim_{r \rightarrow 1}\sum_\alpha \sum_d \left|c^d_{\alpha;\,  0,\ldots,
0,d,0,\ldots,0}\right|^2 r^{2d}\\
& =& \sum_\alpha \sum_d \left|c^d_{\alpha;\,  0,\ldots, 0,d,0,\ldots,0}\right|^2.
\end{eqnarray*}
It follows that
\begin{eqnarray*}
1 & \geq& \lim_{r\rightarrow 1}\left(\frac{1}{2\pi}\right)^\ell \int_0^{2\pi}
\cdots \int_0^{2\pi} \|f\circ \varphi
(re^{i\theta_1}, \ldots, re^{i\theta_{\ell}}) \|^2 d\theta_1 \cdots d\theta_\ell\\
&=& \lim_{r\rightarrow 1} \sum_\alpha \sum_d\sum_{\nu_1 + \cdots +\nu_\ell=d}
\left|c^d_{\alpha;\,  \nu_1,\ldots, \nu_\ell}\right|^2 r^{2d} \\
&=& \sum_\alpha \sum_d\sum_{\nu_1 + \cdots +\nu_\ell=d} \left|c^d_{\alpha;\,
\nu_1,\ldots, \nu_\ell}\right|^2 \\
&\geq& \sum_\alpha\sum_d \left|c^d_{\alpha;\, d,0,\ldots,0}\right|^2  + \cdots +
\sum_\alpha\sum_d \left|c^d_{\alpha;\,  0,\ldots, 0,d}\right|^2  \geq \ell \rho^2.
\end{eqnarray*}
\end{proof}

In finite dimensions, the squeezing constant of the four series of classical
Cartan domains has been computed by Kubota in \cite{k1}. We will now
compute the squeezing constants of the remaining finite rank bounded
symmetric domains of all dimensions.

We begin with the two exceptional domains which are realised as the open
unit balls of the JB*-triples $M_{1,2}(\mathbb{O})$ and
$H_3(\mathbb{O})$ respectively, where $\dim M_{1,2}(\mathbb{O}) = 16$
and $\dim H_3(\mathbb{O})=27$. Both JB*-triples are equipped with the
spectral norm, as noted previously.
They also carry a Hilbert space structure, with  inner product
\begin{equation}\label{ip}
\langle x, y\rangle = \frac{1}{18}{\rm Trace}\, D(x, y)
\qquad (x,y \in H_3(\mathbb{O})),
\end{equation}
shown in \cite[Corollary 2.14]{roos}, where $D(x,y) = 2 x\bo y$.
Given a minimal tripotent $e\in H_3(\mathbb{O})$, we have
$\langle e, e\rangle =1$ \cite[Proposition 2.8]{roos}.
If $e$ and $u$ are two mutually (triple) orthogonal tripotents in
$H_3(\mathbb{O})$, then $\langle e,u\rangle =0$ \cite[Lemma 2.9]{roos}.

The $27$-dimensional domain $D_{27} \subset H_3(\mathbb{O})$ has
rank $3$ whereas the $16$-dimensional domain
$D_{16} \subset M_{1,2}(\mathbb{O})$ has rank $2$. The following two
propositions, together with Kubota's results in \cite{k1}, give a complete list of
squeezing constants of all finite dimensional irreducible bounded symmetric
domains.

\begin{proposition}\label{27}
The squeezing constant of the exceptional domain $D_{27}$ is given by\\
$\hat\sigma_{D_{27}} = 1/\sqrt 3$.
\end{proposition}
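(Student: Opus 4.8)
The plan is to compute $\sigma_{D_{27}}(0)$ directly and then invoke homogeneity. Since $D_{27}$ is a bounded symmetric domain it is homogeneous, and the squeezing function, being a biholomorphic invariant, is therefore constant on $D_{27}$; hence $\hat\sigma_{D_{27}} = \sigma_{D_{27}}(0)$, and it suffices to show this common value equals $1/\sqrt 3$. I would establish the two inequalities separately.

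For the upper bound $\sigma_{D_{27}}(0) \le 1/\sqrt 3$, I would apply Lemma \ref{rho} with $\ell = 3$. Fixing three mutually orthogonal minimal tripotents $e_1, e_2, e_3$ in $H_3(\mathbb{O})$ (which exist because the rank is $3$), the spectral decomposition yields the spectral-norm formula $\|z_1 e_1 + z_2 e_2 + z_3 e_3\| = \max\{|z_1|,|z_2|,|z_3|\}$, so the map $\varphi(z_1,z_2,z_3) = z_1 e_1 + z_2 e_2 + z_3 e_3$ restricts to an injective holomorphic map $\mathbb{D}^3 \to D_{27}$ with $\varphi(0)=0$ and $\varphi(\partial\mathbb{D}^3) \subset \partial D_{27}$. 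Given any $f \in \mathcal{F}(0, D_{27})$ with $\rho B_H \subset f(D_{27})$, the paragraph preceding Lemma \ref{rho} (via Cartan's uniqueness theorem) guarantees that $f$ extends continuously with $f(\partial D_{27}) \subset \partial f(D_{27})$, so Lemma \ref{rho} yields $3\rho^2 \le 1$. Taking the supremum over all such $f$ gives $\sigma_{D_{27}}(0) \le 1/\sqrt 3$.

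For the lower bound I would exhibit an explicit embedding into the Hilbert ball determined by the inner product (\ref{ip}). Writing $\|\cdot\|_2$ for the induced Hilbert norm on $H := H_3(\mathbb{O})$, the key computation is that for $x$ with spectral decomposition $x = \alpha_1 e_1 + \alpha_2 e_2 + \alpha_3 e_3$ one has $\|x\|_2^2 = \alpha_1^2 + \alpha_2^2 + \alpha_3^2$; this follows by sesquilinearity of $\langle\cdot,\cdot\rangle$ together with $\langle e_i, e_i\rangle = 1$ and $\langle e_i, e_j\rangle = 0$ for $i \neq j$, the cited facts from \cite{roos}. Since $\|x\| = \alpha_1 = \max_i \alpha_i$, this gives $\|x\| \le \|x\|_2 \le \sqrt 3\,\|x\|$ for every $x$. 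Consequently the scaled inclusion $f(x) = x/\sqrt 3$ is a holomorphic embedding of $D_{27}$ into $B_H = \{\,\|\cdot\|_2 < 1\,\}$ with $f(0)=0$ (the right inequality ensures $f(D_{27}) \subset B_H$), and the left inequality shows $\tfrac{1}{\sqrt 3} B_H \subset f(D_{27})$; hence $\sigma_{D_{27}}(0) \ge 1/\sqrt 3$.

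Combining the two bounds gives $\sigma_{D_{27}}(0) = 1/\sqrt 3 = \hat\sigma_{D_{27}}$. The main obstacle is the norm-comparison identity $\|x\|_2^2 = \alpha_1^2 + \alpha_2^2 + \alpha_3^2$, which is where the specific Jordan structure of $H_3(\mathbb{O})$ and the normalisation of the trace-form inner product enter; once this is in hand, both inequalities reduce to straightforward applications of Lemma \ref{rho} and the definition of the squeezing function.
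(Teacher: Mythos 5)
Your proposal is correct and follows essentially the same route as the paper: the lower bound via the norm comparison $\|z\|\leq\|z\|_{2}\leq\sqrt{3}\,\|z\|$ coming from the trace-form inner product on $H_{3}(\mathbb{O})$ (equivalently $B_{27}\subset D_{27}\subset\sqrt{3}\,B_{27}$), and the upper bound via the diagonal polydisc map $\varphi(z_{1},z_{2},z_{3})=z_{1}e_{1}+z_{2}e_{2}+z_{3}e_{3}$ together with Lemma \ref{rho} and the boundary-extension remarks preceding it. The only cosmetic difference is that you make the scaling embedding $f(x)=x/\sqrt{3}$ explicit where the paper reads the same bound directly off the ball inclusions.
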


\begin{proof}
We compute $\sigma_{D_{27}} (0) = \hat\sigma_{D_{27}}$.
We have $D_{27} = \{z\in H_3(\mathbb{O}): \|z\|<1\}$, where $\|\cdot\|$
is the spectral norm.
Given $z\in H_3(\mathbb{O})$ with spectral decomposition
\[
z = \alpha_1 e_1 + \alpha_2 e_2 + \alpha_3e_3
\qquad (\alpha_1 \geq \alpha_2 \geq \alpha_3 \geq 0),
\]
the spectral norm $\|z\|$ equals $\alpha_1$, where the minimal tripotents
$e_1, e_2,e_3$ are mutually  orthogonal with respect to the inner product
given in (\ref{ip}). The Hilbert space norm $\|z\|_2$ of $z$ is given by
\[
\|z\|_2^2 = \langle z, z\rangle = \alpha_1^2  +
\alpha_2^2 + \alpha_3^2.
\]
It follows that
\[
\|z\| \leq \|z\|_2 \leq {\sqrt 3} \|z\|
\]
for all $z \in H_3(\mathbb{O})$. This implies
\[
B_{27} \subset D_{27} \subset \sqrt 3 B_{27}
\]
where $B_{27}=\{z\in H_3(\mathbb{O}): \|z\|_2 <1\}$ is the Hilbert ball
in $H_3(\mathbb{O})$. Hence we have
$\hat\sigma_{D_{27}} \geq 1/\sqrt 3$. To show the reverse inequality,
we define a continuous map
$\varphi : \overline{\mathbb{D}}^3 \rightarrow \overline D_{27}$ by
\[
\varphi(z_1, z_2,z_3)
= \left(\begin{matrix} z_1 &0&0\\0&z_2&0\\0&0&z_3
                                 \end{matrix}
                           \right)
= z_1 e_{11} + z_2 e_{22} + z_3 e_{33}
\]
where $e_{jj}$ is the diagonal matrix in $H_3(\mathbb{O})$ with $1$ in the $jj$-entry and $0$
elsewhere. Since $e_{11}, e_{22}, e_{33}$ are mutually (triple) orthogonal
minimal tripotents in $H_3(\mathbb{O})$, we see that $\varphi$
restricts to an injective holomorphic map from $\mathbb{D}^3$ into
$D_{27}$ with $\varphi(0)=0$ and
$\varphi(\partial \mathbb{D}^3 )\subset \partial D_{27}$.
By Lemma \ref{rho} and the remarks before it, for each holomorphic
embedding $f : D_{27} \rightarrow B_{27}$ with $f(0)=0$ and
$\rho B_{27} \subset f(D_{27})$, we must have $3\rho^2 \leq1$.
This proves the reverse inequality.
\end{proof}

\begin{proposition}
\label{16}
The squeezing constant of the exceptional domain $D_{16}$ is given by\\
$\hat\sigma_{D_{16}} = 1/\sqrt 2$.
\end{proposition}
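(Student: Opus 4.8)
**The plan is to compute $\sigma_{D_{16}}(0)$ directly, exactly paralleling the proof of Proposition~\ref{27}, using the Hilbert-space inner product coming from the trace form.**

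The strategy mirrors the rank-$3$ case. First I would record that $D_{16}$ is realised as the open unit ball of $M_{1,2}(\mathbb{O})$ with the spectral norm, and that $M_{1,2}(\mathbb{O})$ sits inside $H_3(\mathbb{O})$ as a subtriple, so it inherits the inner product $\langle x,y\rangle = \tfrac{1}{18}\,\mathrm{Trace}\,D(x,y)$ from (\ref{ip}). Since $D_{16}$ has rank $2$, an element $z \in M_{1,2}(\mathbb{O})$ has a spectral decomposition $z = \alpha_1 e_1 + \alpha_2 e_2$ with $\alpha_1 \geq \alpha_2 \geq 0$ and $e_1,e_2$ mutually orthogonal minimal tripotents. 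Using $\langle e_j,e_j\rangle = 1$ and $\langle e_1,e_2\rangle = 0$ (the facts from \cite{roos} quoted before Proposition~\ref{27}), I obtain the Hilbert-space norm $\|z\|_2^2 = \alpha_1^2 + \alpha_2^2$, while the spectral norm is $\|z\| = \alpha_1$. This yields the two-sided comparison
\[
\|z\| \leq \|z\|_2 \leq \sqrt 2\,\|z\|
\qquad (z \in M_{1,2}(\mathbb{O})),
\]
whence $B_{16} \subset D_{16} \subset \sqrt 2\,B_{16}$ for the Hilbert ball $B_{16} = \{z : \|z\|_2 < 1\}$. Taking $\varphi$ to be the identity inclusion $D_{16} \hookrightarrow \sqrt2\,B_{16}$ and rescaling gives an embedding witnessing $\tfrac{1}{\sqrt2} B_{16} \subset$ (the image), so $\hat\sigma_{D_{16}} \geq 1/\sqrt 2$.

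For the reverse inequality I would invoke Lemma~\ref{rho} with $\ell = 2$. The key is to exhibit the analytic disc structure: choose two mutually orthogonal minimal tripotents $e_1, e_2 \in M_{1,2}(\mathbb{O})$ and define $\varphi(z_1,z_2) = z_1 e_1 + z_2 e_2$ on $\overline{\mathbb{D}}^2$. By \cite[Corollary 3.1.21]{book} we have $\|z_1 e_1 + z_2 e_2\| = \max\{|z_1|,|z_2|\}$, so $\varphi$ restricts to an injective holomorphic map $\mathbb{D}^2 \to D_{16}$ with $\varphi(0)=0$ and $\varphi(\partial\mathbb{D}^2) \subset \partial D_{16}$. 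Then for any holomorphic embedding $f : D_{16} \to B_{16}$ with $f(0)=0$ and $\rho B_{16} \subset f(D_{16})$, the remarks preceding Lemma~\ref{rho} guarantee that $f$ extends to the closure mapping boundary to boundary, so Lemma~\ref{rho} applies and yields $2\rho^2 \leq 1$, i.e. $\rho \leq 1/\sqrt 2$. Combined with the lower bound this gives $\sigma_{D_{16}}(0) = \hat\sigma_{D_{16}} = 1/\sqrt 2$.

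The main obstacle is \emph{verifying that the inner product inherited from $H_3(\mathbb{O})$ restricts correctly to $M_{1,2}(\mathbb{O})$ and that the normalisation $\langle e,e\rangle = 1$ survives the embedding}, so that $\|z\|_2^2 = \alpha_1^2 + \alpha_2^2$ holds with the same constant. Since $M_{1,2}(\mathbb{O})$ is identified as a subtriple of $H_3(\mathbb{O})$, a minimal tripotent of $M_{1,2}(\mathbb{O})$ remains a tripotent in $H_3(\mathbb{O})$, but one must confirm it is still minimal there (or at least that its self-inner-product is $1$) for the trace-form computation to transfer verbatim; this is the one point where the rank-$2$ exceptional case genuinely differs from the rank-$3$ case and requires care with the identification. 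Everything else is a direct transcription of the argument for $D_{27}$ with $3$ replaced by $2$.
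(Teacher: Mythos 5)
Your proposal is correct and follows essentially the same route as the paper: the two-sided norm comparison $\|z\|\leq\|z\|_2\leq\sqrt2\,\|z\|$ from the spectral decomposition gives the lower bound $1/\sqrt2$, and the polydisc map $\varphi(z_1,z_2)=z_1e_1+z_2e_2$ together with Lemma~\ref{rho} gives the matching upper bound. The point you flag about the inner product and minimality transferring under the identification of $M_{1,2}(\mathbb{O})$ as a subtriple of $H_3(\mathbb{O})$ is exactly the identification the paper also relies on, so there is no substantive difference.
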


\begin{proof} The arguments are similar to those in the proof of Lemma \ref{27},
we recapitulate for completeness. We consider $M_{1,2}(\mathbb{O})$ as a subtriple of
$H_3(\mathbb{O})$. It suffices to show $\sigma_{D_{16}} (0) =1/\sqrt 2$.
We have $D_{16} = \{z\in M_{1,2}(\mathbb{O}): \|z\|<1\}$, where $\|\cdot\|$
is the spectral norm. Given $z\in M_{1,2}(\mathbb{O})$ with spectral decomposition
\[
z = \alpha_1 e_1 + \alpha_2 e_2  \qquad (\alpha_1 \geq \alpha_2 \geq 0),
\]
the spectral norm $\|z\|$ equals $\alpha_1$, where the minimal tripotents
$e_1, e_2$ are mutually orthogonal with respect to the inner product given in
(\ref{ip}). The Hilbert space norm $\|z\|_2$ of $z$ is given by
\[
\|z\|_2^2 = \langle z, z\rangle = \alpha_1^2  + \alpha_2^2
\]
and
\[
\|z\| \leq \|z\|_2 \leq {\sqrt 2} \|z\|
\]
for all $z \in M_{1,2}(\mathbb{O})$. This implies
\[
B_{16} \subset D_{16} \subset \sqrt 2 B_{16},
\]
where $B_{16}=\{z\in M_{1,2}(\mathbb{O}): \|z\|_2 <1\}$ is the Hilbert ball
in $M_{1,2}(\mathbb{O})$. Hence
$\hat\sigma_{D_{16}} \geq 1/\sqrt 2$. For the reverse inequality, one defines
a continuous map
$\varphi : \overline{\mathbb{D}}^2 \rightarrow \overline D_{16}$ by
\[
\varphi(z_1, z_2) = z_1 e_{11} + z_2 e_{22}
\]
where  $e_{11}=(1,0)$ and $e_{22}=(0,1)$ are mutually (triple) orthogonal
minimal tripotents in $M_{1,2}(\mathbb{O})$,  and $\varphi$
restricts to an injective holomorphic map from $\mathbb{D}^2$ into $D_{16}$
with $\varphi(0)=0$ and $\varphi(\partial \mathbb{D}^2 )\subset \partial D_{16}$.
As before, for each holomorphic embedding $f : D_{16} \rightarrow B_{16}$
satisfying $f(0)=0$ and $\rho B_{16} \subset f(D_{16})$, we must have
$2\rho^2 \leq1$.  This proves the reverse inequality.
\end{proof}

The following result extends Kubota's result \cite{k1} for the classical Cartan
domains to all finite dimensional irreducible bounded symmetric domains.

\begin{corollary}
\label{p}
Let $D$ be a finite dimensional irreducible bounded symmetric domain of rank $p$.
Then its squeezing constant is given by $\hat \sigma_D = 1/\sqrt p$.
\end{corollary}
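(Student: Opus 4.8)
The plan is to reduce the statement to a finite case analysis dictated by É.~Cartan's classification and then to assemble the values. Since $D$ is finite dimensional and irreducible, it is biholomorphic to the open unit ball of exactly one of the six types of finite dimensional Cartan factors listed in Section~3. For the four classical series (Types I--IV) the squeezing constant was computed by Kubota in \cite{k1}, and for the two exceptional domains (Types V and VI) it was computed in Propositions~\ref{16} and~\ref{27}; so at the crudest level the corollary is just a matter of checking that in each of the six cases the value produced equals $1/\sqrt p$ with $p$ the rank. For the exceptional cases this is immediate: $D_{16}$ has rank $2$ with $\hat\sigma_{D_{16}}=1/\sqrt2$, and $D_{27}$ has rank $3$ with $\hat\sigma_{D_{27}}=1/\sqrt3$. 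For the classical series one rewrites Kubota's formulae, stated in terms of the matrix-size parameters, in terms of rank: a Type I domain in $L(\mathbb C^\ell,\mathbb C^m)$ has rank $\min(\ell,m)$, a Type III domain has rank $\ell$, a Type II domain has rank $[\ell/2]$, and the Lie ball (Type IV) has rank $2$.

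I would prefer, however, to give a single uniform argument that simultaneously recovers all six values, since the method already used for the exceptional domains applies verbatim. Realise $D$ as the open unit ball of a Cartan factor $V$ with its spectral norm $\|\cdot\|$, and equip $V$ with the normalised trace inner product, whose norm $\|\cdot\|_2$ makes the minimal tripotents orthonormal. Using the spectral decomposition $z=\alpha_1e_1+\cdots+\alpha_pe_p$ with $\alpha_1\ge\cdots\ge\alpha_p\ge0$ and mutually orthogonal minimal tripotents $e_1,\dots,e_p$, one has $\|z\|=\alpha_1$ and $\|z\|_2^2=\alpha_1^2+\cdots+\alpha_p^2$, whence
$$
\|z\|\le\|z\|_2\le\sqrt p\,\|z\|\qquad(z\in V).
$$
This gives $B\subset D\subset\sqrt p\,B$ for the Hilbert ball $B=\{z:\|z\|_2<1\}$, so the rescaled inclusion $\tfrac{1}{\sqrt p}B\subset \tfrac{1}{\sqrt p}D\subset B$ shows $\sigma_D(0)\ge1/\sqrt p$; as $D$ is homogeneous its squeezing function is constant, hence $\hat\sigma_D=\sigma_D(0)\ge1/\sqrt p$.

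For the reverse inequality I would use a polydisc embedding together with Lemma~\ref{rho}. Choose a maximal family $e_1,\dots,e_p$ of mutually orthogonal minimal tripotents (which exists because $r(V)=p$) and set $\varphi(z_1,\dots,z_p)=z_1e_1+\cdots+z_pe_p$. Since $\|\sum_k z_ke_k\|=\max_k|z_k|$, the map $\varphi$ restricts to an injective holomorphic map $\mathbb D^p\to D$ with $\varphi(0)=0$ and $\varphi(\partial\mathbb D^p)\subset\partial D$. Given any holomorphic embedding $f:D\to B$ with $f(0)=0$ and $\rho B\subset f(D)$, the remarks preceding Lemma~\ref{rho} ensure that $f$ extends continuously to the closure and carries $\partial D$ into $\partial f(D)$, so Lemma~\ref{rho} yields $p\rho^2\le1$, i.e. $\rho\le1/\sqrt p$. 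Taking the supremum over such $f$ gives $\sigma_D(0)\le1/\sqrt p$, and therefore $\hat\sigma_D=1/\sqrt p$.

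The main obstacle is the uniform justification of $\|z\|_2^2=\alpha_1^2+\cdots+\alpha_p^2$ with the minimal tripotents orthonormal, across all six types at once. For the classical factors this is the singular value decomposition together with the trace form $\langle x,y\rangle=c\,\mathrm{Trace}(xy^*)$ for the correct normalising constant $c$; for the exceptional factors it is exactly the content of the inner product $(\ref{ip})$ and the orthogonality relations of Roos already exploited in Propositions~\ref{27} and~\ref{16}. For the spin factor one must verify that the built-in inner product, normalised so that minimal tripotents have unit length, gives $\|z\|_2^2=|\lambda|^2+|\mu|^2$ on $z=\lambda e_1+\mu e_2$; this is the single place where the normalisation has to be tracked with care. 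Once this book-keeping is settled, the two displayed inequalities close the argument uniformly, and the corollary follows without appealing to Kubota's individual formulae.
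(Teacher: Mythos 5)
Your proposal is correct, and its first paragraph is precisely the paper's own (implicit) proof: the corollary is stated in the paper with no separate argument, being the combination of Kubota's computation for the four classical series with Propositions \ref{27} and \ref{16} for the two exceptional domains, after translating the matrix-size parameters into ranks. The uniform argument you then sketch is a genuine consolidation rather than a different idea: the two-sided norm comparison $\|z\|\le\|z\|_2\le\sqrt{p}\,\|z\|$ via the spectral decomposition, plus the polydisc map $\varphi(z_1,\dots,z_p)=\sum_k z_ke_k$ fed into Lemma \ref{rho}, is exactly the template the paper applies to $H_3(\mathbb{O})$, $M_{1,2}(\mathbb{O})$, the spin factors and the infinite-dimensional type I factors in the proof of Theorem \ref{bd}; you are simply observing that it also covers the classical finite-dimensional factors, which would make the citation of Kubota unnecessary. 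What this buys is a single self-contained proof; what it costs is the book-keeping you correctly flag, namely verifying for each type that triple-orthogonal minimal tripotents are orthonormal for a suitably normalised invariant inner product (the only delicate cases being the antisymmetric factor, whose minimal tripotents have matrix rank two so the trace form must be halved, and the spin factor, where the paper instead just cites the inequality $\|\cdot\|\le\|\cdot\|_h\le\sqrt2\,\|\cdot\|$ from \cite{lie}). With that normalisation checked, both inequalities close as you describe, using homogeneity of $D$ to pass from $\sigma_D(0)$ to $\hat\sigma_D$.
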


We are now ready to show that finite-rank bounded symmetric domains, which can be infinite
dimensional, are HHR and compute their squeezing constants.

\begin{theorem} \label{bd}
Let $D$ be a bounded symmetric domain in a complex Banach space.
Then $D$ is HHR if and only if
it is of finite rank. In this case, $D$ is biholomorphic
to a finite product
$$
D_1 \times \cdots \times D_k
$$
of irreducible bounded symmetric domains and we have
\begin{equation}\label{const}
\hat \sigma_D = \left( \frac{1}{\hat \sigma_{D_1}^2} + \cdots
+ \frac{1}{\hat \sigma_{D_k}^2}\right)^{-1/2}.
\end{equation}

If $\dim D_j <\infty$, then  $D_j$ is a classical Cartan domain or an
exceptional domain, and $\hat \sigma_{D_j}= 1/\sqrt{p_j}$
where $p_j$ is the rank of $D_j$.

If $\dim D_j =\infty$, then $D_j$ is either a Lie ball or a Type I domain of
finite rank $p_j$.  For a Lie ball $D_j$, we have $\hat \sigma_{D_j} = 1/\sqrt 2$.
For a rank $p_j$ Type I domain $D_j$, we have $\hat \sigma_{D_j} = 1/\sqrt{p_j}$.

\end{theorem}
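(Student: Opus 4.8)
The plan is to prove the two implications separately and then to assemble the quantitative formula from the irreducible pieces, using the two exceptional-domain computations (Propositions \ref{27} and \ref{16}) as templates. For the ``only if'' direction, if $D$ is HHR it admits a holomorphic embedding into some Hilbert ball, so by the remarks of Section 3 its ambient space $V$ is an isomorph of a Hilbert space, hence reflexive, hence of finite rank by Kaup's characterisation (\cite[Proposition 3.2]{kaup1}) recalled earlier. Conversely, suppose $D$ has finite rank. Then $V$ is an $\ell^\infty$-sum of finitely many finite-rank Cartan factors $V_1,\dots,V_k$, so $D$ is the open unit ball of $(V,\|\cdot\|)$ and is biholomorphic to the product $D_1\times\cdots\times D_k$ of the irreducible domains $D_j=\{v\in V_j:\|v\|<1\}$. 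Each $V_j$, and hence $V$, is an isomorph of a Hilbert space, so embeddings into Hilbert balls exist; and since $D$ is homogeneous, $\sigma_D$ is constant, so it suffices to compute $\sigma_D(0)=\hat\sigma_D$. It then remains to compute each $\hat\sigma_{D_j}$ and to prove the product formula.

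For the irreducible constants I would argue that $\hat\sigma_{D_j}=1/\sqrt{p_j}$ in every case, $p_j$ being the rank. The finite-dimensional factors are covered by Corollary \ref{p}. The only infinite-dimensional finite-rank factors are the Type I domains $L(\mathbb{C}^\ell,K)$ of rank $\ell$ and the Lie balls of rank $2$, and for these I would copy the exceptional-domain method: a comparison $\|z\|\le\|z\|_2\le\sqrt{p_j}\,\|z\|$ between the spectral norm and a Hilbert norm yields $B\subset D_j\subset\sqrt{p_j}\,B$, so $f=p_j^{-1/2}\,\mathrm{id}$ is an embedding into the Hilbert ball $B$ with $f(D_j)\supset p_j^{-1/2}B$, giving $\sigma_{D_j}(0)\ge 1/\sqrt{p_j}$; the reverse inequality comes from Lemma \ref{rho} applied to the rank-$p_j$ polydisc map (\ref{vp}) respectively (\ref{vpl}), which forces $p_j\rho^2\le 1$. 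Here the Hilbert norm is the Hilbert--Schmidt norm for Type I (where $\|T\|\le\|T\|_2\le\sqrt\ell\,\|T\|$ was already recorded), and for a Lie ball it is the complete inner product carried by the spin factor, for which two orthogonal minimal tripotents are orthonormal, so a spectral decomposition $z=\alpha_1e_1+\alpha_2e_2$ gives $\|z\|_2^2=\alpha_1^2+\alpha_2^2$ and hence $\|z\|\le\|z\|_2\le\sqrt2\,\|z\|$. In particular each $\hat\sigma_{D_j}>0$, so each $D_j$ is HHR.

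For the product formula I would prove matching bounds. For the lower bound, fix $r_j<\hat\sigma_{D_j}$ and embeddings $f_j\colon D_j\to B_{H_j}$ with $f_j(0)=0$ and $r_jB_{H_j}\subset f_j(D_j)\subset B_{H_j}$. On the Hilbert space $H=H_1\oplus\cdots\oplus H_k$ with $\|(x_j)\|^2=\sum_j\|x_j\|^2$, define, for $t_j>0$ with $\sum_j t_j^2\le 1$, the embedding $g(z_1,\dots,z_k)=(t_1f_1(z_1),\dots,t_kf_k(z_k))$; then $g(0)=0$, $g$ maps $D$ into $B_H$, and $g(D)=\prod_j t_jf_j(D_j)\supset\prod_j(t_jr_j)B_{H_j}\supset(\min_j t_jr_j)B_H$. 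Maximising $\min_j t_jr_j$ under $\sum_j t_j^2=1$ (balancing the products equal, i.e.\ $t_j=\rho/r_j$) gives $\sigma_D(0)\ge(\sum_j r_j^{-2})^{-1/2}$, and letting $r_j\to\hat\sigma_{D_j}$ yields $\hat\sigma_D\ge(\sum_j\hat\sigma_{D_j}^{-2})^{-1/2}$. For the upper bound, note that the rank of $D$ is $p:=\sum_j p_j$, and the product $\varphi=\varphi_1\times\cdots\times\varphi_k\colon\overline{\mathbb{D}}^{\,p}\to\overline D$ of the factor polydisc maps restricts to a holomorphic map $\mathbb{D}^{p}\to D$ with $\varphi(0)=0$ and $\varphi(\partial\mathbb{D}^{p})\subset\partial D$, since a boundary point of $\mathbb{D}^{p}$ has some coordinate of modulus one, forcing the max-norm of its image to equal one. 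Lemma \ref{rho} then gives $p\rho^2\le 1$ for every competing $\rho$, so $\sigma_D(0)\le 1/\sqrt p$. As $\hat\sigma_{D_j}=1/\sqrt{p_j}$ we have $(\sum_j\hat\sigma_{D_j}^{-2})^{-1/2}=1/\sqrt p$, so the two bounds coincide and $\hat\sigma_D=(\sum_j\hat\sigma_{D_j}^{-2})^{-1/2}>0$; in particular $D$ is HHR.

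I expect the genuinely new step to be the product lower bound: identifying the correct Hilbert direct sum $H=\bigoplus_j H_j$ (rather than the $\ell^\infty$-type polyball) and performing the diagonal scaling optimisation is where the sum-of-reciprocals shape is produced. The matching upper bound then hinges on rank additivity $\operatorname{rank}(D)=\sum_j p_j$ together with the verification that the product of the factor polydiscs still sends $\partial\mathbb{D}^{p}$ into $\partial D$; the individual infinite-dimensional computations, by contrast, should go through almost verbatim from Propositions \ref{27} and \ref{16}.
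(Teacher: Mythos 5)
Your proposal is correct and follows the paper's proof almost step for step: the ``only if'' direction via reflexivity and Kaup's finite-rank criterion, the decomposition into Cartan factors, the norm sandwiches $\|\cdot\|\le\|\cdot\|_2\le\sqrt{p_j}\,\|\cdot\|$ for the lower bounds on the irreducible factors, and the product polydisc map fed into Lemma \ref{rho} for the upper bound $\hat\sigma_D\le(p_1+\cdots+p_k)^{-1/2}$. The one place you genuinely diverge is the lower bound in the product formula. The paper stays concrete: from $B_j\subset D_j\subset\sqrt{p_j}\,B_j$ it deduces $B\subset D_1\times\cdots\times D_k\subset\sqrt{p_1+\cdots+p_k}\,B$ for the unit ball $B$ of the Hilbert direct sum $H_1\oplus_2\cdots\oplus_2 H_k$, so a single scaled identity map already witnesses $\sigma_D(0)\ge(p_1+\cdots+p_k)^{-1/2}$. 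You instead take arbitrary near-optimal embeddings $f_j$ of the factors and optimise the diagonal scaling $(t_1f_1,\ldots,t_kf_k)$ subject to $\sum_j t_j^2\le 1$, arriving at $\hat\sigma_D\ge\bigl(\sum_j\hat\sigma_{D_j}^{-2}\bigr)^{-1/2}$ directly in terms of the factor constants. Your version is mildly more general---it proves the reciprocal-square-sum lower bound for a product of arbitrary HHR domains, not only the symmetric ones where $\hat\sigma_{D_j}=1/\sqrt{p_j}$---at the cost of a small optimisation step; the paper's version is shorter because the explicit norm comparisons are already in hand. Both routes produce matching bounds and the stated formula, so the argument is sound.
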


\begin{proof} Let $D$ be HHR,
realised as the open unit ball
of a JB*-triple $V$. Then $V$  is linearly homeomorphic to some Hilbert space $H$.  In
particular, $V$ is reflexive and hence $D$
 is of finite rank. Conversely, a finite-rank bounded symmetric domain $D$ decomposes into a finite Cartesian product
$D= D_1 \times \cdots \times D_k$ of irreducible bounded symmetric domains,
where each $D_j$ is of finite rank $p_j$ and realised as the open unit ball of a
Cartan factor $V_j$ for $j=1, \ldots, k$.

To complete the proof, we show that each domain $D_j$ of rank $p_j$ has squeezing constant
$\hat \sigma_{D_j}= 1/\sqrt{p_j}$ and $\hat\sigma_D = (p_1 + \cdots +p_k)^{-1/2}$.

By Corollary \ref{p}, we have $\hat \sigma_{D_j} = 1/\sqrt p_j$ if
$\dim V_j <\infty$. In fact, this is also the case even if
$V_j$ is infinite dimensional, in which case $V_j$ is either a spin factor or the
type I Cartan factor $L(\mathbb{C}^\ell, K)$ with $\dim K = \infty>\ell$.  We now
compute the squeezing constant in these two cases.

First, let $D_j$ be a Lie ball, that is, the open unit ball of a spin factor
$(V, \|\cdot\|)$, which has rank $2$.
In this case, $V$ is a Hilbert space with norm $\|\cdot\|_h$ satisfying
$$
\|\cdot\| \leq \|\cdot\|_h \leq \sqrt 2 \|\cdot\|
$$
(cf.\,\cite[Section 2]{lie}). This gives $\hat\sigma_{D_j} \geq 1/\sqrt 2$.
Making use of the map $\varphi$ in (\ref{vpl}) and analogous arguments in
the proof of Lemma \ref{16}, one concludes that
$\hat\sigma_{D_j} =\hat\sigma_{D_j}(0)= 1/\sqrt 2$.

Next, let $D_j$ be a Type I domain of rank $p_j$, realised as the open unit ball
$$
D_j=\{T\in L(\mathbb{C}^{p_j}, K): \|T\| <1\}
$$
of $L(\mathbb{C}^{p_j}, K)$ with $\dim K = \infty$. Equipped with the
Hilbert-Schmidt norm $\|\cdot\|_2$, the vector space $L(\mathbb{C}^{p_j}, K)$
is a Hilbert space.
Let $B=\{T\in L(\mathbb{C}^{p_j}, K): \|T\|_2 <1\}$ be its open unit ball.
Since $\|\cdot\| \leq \|\cdot\|_2\leq \sqrt{p_j} \|\cdot\|$, we have
$B \subset D_j \subset \sqrt{p_j} B$ and therefore
$\hat\sigma_{D_j}(0) \geq 1/\sqrt {p_j}$.
As before, using the map $\varphi$ in (\ref{vp}) and similar arguments, we
deduce $\hat\sigma_{D_j} = \hat\sigma_{D_j}(0) = 1/\sqrt{p_j}$.

It remains to establish (\ref{const}). The domain
$D =  D_1 \times \cdots\times  D_k$ is the open unit ball of the
$\ell_\infty$-sum
$$
V_1 \oplus \cdots \oplus V_k
$$
of Cartan factors, where  $D_j$ is the open unit ball of $V_j$ of rank $p_j$
for $j=1, \ldots,k$.  We observe from the previous arguments that for each
domain $D_j$,  one can construct a continuous map
$\varphi_j : \overline{\mathbb{D}}^{p_j} \rightarrow \overline D_j$ which restricts
to a holomorphic map from ${\mathbb{D}}^{p_j}$ to $ D_j$ satisfying
$\varphi_j (0) = 0$ and
$\varphi_j(\partial {\mathbb{D}}^{p_j}) \subset \partial D_j$. Hence the product map
$$
\varphi:= \varphi_1 \times \cdots \times \varphi_k :
\overline{\mathbb{D}}^{p_1} \times \cdots \times \overline{\mathbb{D}}^{p_k}
\rightarrow \overline D_1 \times \cdots\times \overline D_k = \overline D
$$
is continuous, which restricts to a holomorphic map from
${\mathbb{D}}^{p_1} \times \cdots \times
{\mathbb{D}}^{p_k}$ to $ D_1 \times \cdots\times  D_k$ satisfying
$\varphi(0, \ldots,0) = (0, \ldots, 0)$ and maps the boundary of
${\mathbb{D}}^{p_1} \times \cdots \times {\mathbb{D}}^{p_k}$ into the boundary of
$ D_1 \times \cdots\times  D_k =D$. Applying Lemma \ref{vp} again, we deduce
that
$$
\hat\sigma_D \leq \frac{1}{\sqrt{p_1 + \cdots+p_k}}
= \frac{1}{\sqrt{\hat{\sigma}_{D_1}^{-2} + \cdots + \hat{\sigma}_{D_k}^{-2}}}.
$$

For each $j=1, \ldots,k$, the previous arguments reveal that
there is a Hilbert space $H_j$ with open unit ball $B_j$ such that
$$
B_j \subset D_j \subset \sqrt{p_j} B_j.
$$
Let $B$ be the open unit ball of the Hilbert space direct sum
$H_1 \oplus_2 \cdots \oplus_2 H_k$.
Then we have
$$
B \subset D_1 \times \cdots \times D_k \subset \sqrt{ p_1 + \cdots + p_k}\,B.
$$
This implies that
$$
\hat\sigma_D \geq \frac{1}{\sqrt{p_1 + \cdots+p_k}}
$$
which completes the proof.
\end{proof}

\section{Uniformly elliptic domains}\label{u}

\newcommand{\CC}{\mathbb{C}}
\newcommand{\cH}{H}

Bounded symmetric domains can be realised as convex domains in Banach spaces and those which are
HHR have been completely determined previously. We conclude the paper in this section by introducing
a large class of bounded convex domains, which include the strongly convex domains, and show that these
domains are HHR in Hilbert spaces.
The domains to be introduced are called {\it uniformly elliptic domains}.

We begin with a preamble. Recall that
a finite dimensional bounded domain $D\subset \mathbb{C}^n$ with a $C^2$ boundary $\partial D$
is called {\it strongly convex} if all normal curvatures of $\partial D$ are positive (cf.\,\cite[p.108]{ab}).
Such a domain is a manifold with curvature pinched which entails
the existence of two positive constants $R>r>0$ such that for each $q\in \partial D$,
there are two points $q',q''$ in $\mathbb{C}^n$ with the property that $q$ is a common boundary point of the Euclidean balls
$B_{\mathbb{C}^n} (q', r)$ and $B_{\mathbb{C}^n} (q'', R)$ satisfying $B_{\mathbb{C}^n} (q', r)
\subset D \subset B_{\mathbb{C}^n} (q'', R)$. For fixed $r$ and $R$, it can be seen that $q'$ and $q''$
are unique  and colinear with $q$. For instance, an ellipsoid is strongly convex and has this property.

In view of the fact that Hilbert balls are the only bounded symmetric domains with a $C^2$ boundary,
we generalise the concept of strong convexity to infinite dimension without the assumption of a smooth
boundary, to cover a wider class of domains, as follows.

\begin{definition}\label{sc} \rm
A bounded  convex domain $\Omega$ in a complex Banach space
$V$   is called
{\it uniformly elliptic} if there exist universal constants $r, R$ with $0<r< R$
such that to each $q \in \partial\Omega$, there correspond two unique points $q', q'' \in V$,
colinear to $q$, satisfying
\begin{itemize}
\item[(\ref{sc}.1)] $B_V (q', r) \subset \Omega \subset B_V (q'', R)$;
\item[(\ref{sc}.2)] $q\in \partial B_V (q', r) \cap \partial B_V (q'', r)$, that is,
$q$ is a common boundary point of $ B_V (q', r)$,
$B_V(q'', R)$ and $\Omega$.
\end{itemize}
\end{definition}

Evidently,
the definition of uniform ellipticity depends on the norm of the ambient Banach space.
By the previous remarks, strongly convex domains are uniformly elliptic, but the converse is
false. In fact,  all open
balls in Banach spaces are uniformly elliptic. Indeed, if say, $\Omega = B_V$ is the open unit ball of
a Banach space $V$, then for each boundary point $q \in \partial \Omega$,
we have $\|q\|=1$ and
$$B_V(q/2, {1/2}) \subset \Omega = B_V(0,1)$$
and $q \in \partial B_V(\frac{q}{2},\frac{1}{2})\, \cap\, \partial \Omega \, \cap\,\partial  B_V(0,1)$.
For $R=1$ and $r=1/2$, the points $q'=q/2$ and $q'' = 0$ are unique and colinear to $q$.

By definition, each point $p$ in a uniformly elliptic domain $\Omega$ in a Banach space $V$
lies in the ball $B_V(q'',R)$ for all $q\in
\partial \Omega$, as in (\ref{sc}.1) above, although $p$ need not be colinear with $q$ and $q''$.
We consider the question of colinearity below.

\begin{lemma}\label{lin} Let $\Omega$ be a uniformly elliptic domain in a Banach space $V$ and for each
$q\in \partial \Omega$, let
$$B_V (q', r) \subset \Omega \subset B_V (q'', R), \quad q\in \partial B_V (q', r) \cap \partial B_V (q'', r)$$
be as in the definition of uniform ellipticity. Then for each $p\in \Omega$ and $q\in \partial \Omega$
with $\|p-q\| = d(p, \partial \Omega)$, either
$p$ is colinear with $q$ and $q''$ or, there exists $q_1\in \partial \Omega$ such that $p$ is
colinear with $q_1$ and $q_1''=q''$ satisfying $\|p-q_1\|=\|p-q\|$.
\end{lemma}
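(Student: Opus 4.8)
The plan is to convert the hypothesis $\|p-q\|=d(p,\partial\Omega)$ into an equality in the triangle inequality along the circumscribing ball attached to $q$, and then to read off $q_1$ as the ``antipodal'' tangency point on the line through $q''$ and $p$. Write $\rho=d(p,\partial\Omega)=\|p-q\|$. First I would record two facts. (i) $B_V(p,\rho)\subset\Omega$: if some $\omega\in B_V(p,\rho)$ lay outside $\Omega$, the segment $[p,\omega]$ would meet $\partial\Omega$ at a point nearer to $p$ than $\rho$, contradicting the definition of $\rho$ --- this is exactly the argument already used in Lemma \ref{boundary}. (ii) From the defining data of $q$ we have $\Omega\subset B_V(q'',R)$ and $\|q-q''\|=R$. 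Combining (i) with the chain $B_V(p,\rho)\subset\Omega\subset B_V(q'',R)$ gives $\|p-q''\|+\rho\le R$ (using $\sup_{\|v\|\le\rho}\|w+v\|=\|w\|+\rho$), while the triangle inequality gives $R=\|q-q''\|\le\|q-p\|+\|p-q''\|=\rho+\|p-q''\|$. Hence
$$\|q''-q\|=\|q''-p\|+\|p-q\|=R,\qquad \|p-q''\|=R-\rho.$$

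If $p$ is colinear with $q$ and $q''$ we are in the first alternative and there is nothing to prove; in particular, if $V$ is strictly convex (e.g. a Hilbert space, which is the setting of Theorem \ref{strongconv}) the displayed equality already forces $p\in[q,q'']$, so the second alternative is only needed for general Banach spaces. Otherwise $p\neq q''$ (indeed $p=q''$ would give $\Omega=B_V(q'',R)$, placing us in the first alternative), so I set $n=(p-q'')/\|p-q''\|$ and define
$$q_1=q''+Rn=q''+\tfrac{R}{R-\rho}(p-q'').$$
Then $\|q_1-q''\|=R$, so $q_1\notin\Omega$ (as $\Omega\subset B_V(q'',R)$); and $\|q_1-p\|=R-\|p-q''\|=\rho$, so $q_1\in\partial B_V(p,\rho)\subset\overline{B_V(p,\rho)}\subset\overline\Omega$. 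Therefore $q_1\in\partial\Omega$, it is colinear with $p$ and $q''$ by construction, and $\|p-q_1\|=\rho=\|p-q\|$.

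It remains to prove $q_1''=q''$, and this is where I expect the real work. By uniform ellipticity $q_1$ possesses a unique pair $(q_1',q_1'')$ as in Definition \ref{sc}. Since $q''$ is colinear with $q_1$ and already satisfies $\Omega\subset B_V(q'',R)$ and $q_1\in\partial B_V(q'',R)$, the uniqueness clause of Definition \ref{sc} will force $q_1''=q''$ as soon as I exhibit the matching inscribed ball, namely show $B_V(q_1',r)\subset\Omega$ for
$$q_1'=q_1+\tfrac rR(q''-q_1)=q''+(R-r)n,$$
the point at distance $r$ from $q_1$ along the segment towards $q''$. When $r\le\rho$ this is immediate: $q_1'$ and $p$ both lie on the ray in direction $n$ with $\|q_1'-p\|=\rho-r$, so every $z\in B_V(q_1',r)$ satisfies $\|z-p\|<r+(\rho-r)=\rho$, whence $B_V(q_1',r)\subset B_V(p,\rho)\subset\Omega$. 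The genuinely delicate case is $r>\rho$ (the relevant regime when $p$ is close to $\partial\Omega$), and establishing $B_V(q_1',r)\subset\Omega$ there --- combining convexity of $\Omega$ with the inscribed ball at $q$ and the uniqueness built into uniform ellipticity --- is the main obstacle, since in a non-strictly-convex norm internal tangency of $B_V(q_1',r)$ and $B_V(q'',R)$ at $q_1$ does not by itself place $q_1'$ on the line through $q_1$ and $q''$. Once this containment is secured, $(q_1',q'')$ is a valid pair for $q_1$, and uniqueness yields $q_1''=q''$, completing the proof.
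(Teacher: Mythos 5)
Your construction of $q_1$ is exactly the paper's: both proofs take $q_1$ to be the point of $\partial B_V(q'',R)$ on the ray from $q''$ through $p$, and both then check $q_1\in\partial\Omega$ and $\|p-q_1\|=\|p-q\|$. The paper gets this by observing $\|p-q_1\|=d(p,\partial B_V(q'',R))\le\|p-q\|$ and deriving a contradiction from $q_1\notin\overline{\Omega}$ (a segment from $p$ to $q_1$ would cross $\partial\Omega$ strictly closer to $p$ than $d(p,\partial\Omega)$); your route via $B_V(p,\rho)\subset\Omega\subset B_V(q'',R)$ and $\|q_1-p\|=\rho$ is an equivalent computation, and your preliminary identities $\|p-q''\|=R-\rho$ and the strict-convexity remark are correct but not needed.

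The divergence --- and the gap --- is the final step $q_1''=q''$, which you explicitly leave open when $r>\rho$. The paper closes it in one phrase, ``by uniqueness of $q_1'$ and $q_1''$'': it reads Definition \ref{sc} as saying that the outer datum at a boundary point is unique by itself, i.e.\ any $c$ colinear with $q_1$ satisfying $\Omega\subset B_V(c,R)$ and $q_1\in\partial B_V(c,R)$ must be $q_1''$. Since $q''$ has exactly these properties, $q_1''=q''$ follows at once, with no need to exhibit an inscribed ball. Your stricter reading --- that only the full pair $(q_1',q_1'')$ satisfying (\ref{sc}.1) and (\ref{sc}.2) jointly is unique --- forces you to produce $B_V(q_1',r)\subset\Omega$ first, and that is precisely where you stall. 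So as submitted the proposal is incomplete: it stops exactly where the paper's appeal to uniqueness begins. To match the paper you should either adopt its reading of the uniqueness clause (which its own proof evidently intends), in which case your argument is finished, or else supply the missing containment; the paper offers no help with the latter, so the obstacle you identify is, under the pairwise reading, a criticism of the definition's phrasing rather than something the paper resolves.
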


\begin{proof} Let $q\in \partial \Omega \cap \partial B_V(q'',R)$ satisfy $\|p-q\|= d(p, \partial \Omega)$.
Suppose $p$ is not colinear with $q$ and $q''$. We show the existence of $q_1$ in the lemma.

Consider $p\in \Omega \subset B_V(q'',R)$. Extend the (real) line through $q''$ and $p$ to a point
$q_1 \in \partial B_V(q'',R)$. Then we have $\|p-q_1\| = d(p, \partial B_V(q'',R))\leq \|p-q\|$. We show that
$q_1 \in \partial \Omega$,  which would imply $\|p-q_1\| \geq \|p-q\|$
and complete the proof by uniqueness of
$q_1'$ and $q_1''$.

If $q_1 \notin \partial \Omega$, we deduce a contradiction. Since $q_1 \notin \overline\Omega$ and $p\in \Omega$,
the line joining $p$ and $q_1$ must intersect $\partial \Omega$ at some point $\omega$, say. Now
we have the contradiction
$$\|p-q\| \geq  \|p-q_1\| > \|p-\omega\| \geq d(p, \partial \Omega)=\|p-q\|.$$
\end{proof}

We will discuss uniformly elliptic domains in greater detail in another work, but complete this section
presently by showing that these domains are HHR in Hilbert spaces, which
generalises the finite dimensional result for strongly convex domains in \cite[Proposition 1]{yeung}.

\begin{theorem}\label{strongconv}
Let $\Omega$ be a uniformly elliptic domain in a
Hilbert space $H$.  Then $\Omega$ is HHR.
\end{theorem}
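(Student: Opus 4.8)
The plan is to argue by contradiction, feeding the boundary analysis already developed into an explicit embedding built from a M\"obius automorphism of the Hilbert ball. First I would record that $\Omega$, being a bounded domain in the Hilbert space $H$, admits the holomorphic embedding $z \mapsto z/M$ into $B_H$ whenever $\Omega \subset B_H(0,M)$; thus $H(\Omega,B_H)\neq\emptyset$ and $\sigma_\Omega$ is defined. Suppose $\hat\sigma_\Omega = 0$. Then there is a sequence $(p_k)$ with $\sigma_\Omega(p_k)\to 0$, and Lemma \ref{boundary} supplies a subsequence $(p_j)$ together with nearest boundary points $q_j\in\partial\Omega$ satisfying $\|p_j-q_j\| = d(p_j,\partial\Omega)\to 0$. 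Applying Lemma \ref{lin} (and replacing $q_j$ by the point it produces, if necessary), I may assume $p_j$ is colinear with $q_j$ and the outer centre $q_j''$, where $B_H(q_j',r)\subset\Omega\subset B_H(q_j'',R)$ and $q_j\in\partial B_H(q_j',r)\cap\partial B_H(q_j'',R)$ as in Definition \ref{sc}.

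Next I would normalise and construct the embedding. Let $A_j(z) = (z-q_j'')/R$, the affine bijection carrying the outer ball onto $B_H$, and write $e_j = A_j(q_j)\in\partial B_H$, $a_j = A_j(p_j)$, and $s = r/R$. Then $A_j(B_H(q_j',r)) = B_H(c_j,s)$ with $c_j = (1-s)e_j$, and colinearity forces $a_j = t_je_j$ with $t_j\in(0,1)$ and $t_j\to 1$ (since $\|a_j-e_j\| = \|p_j-q_j\|/R\to 0$). A short computation gives $\|a_j-c_j\| = |t_j-(1-s)| = s-(1-t_j) < s$ for large $j$, so $a_j$ lies in the inner ball $B_H(c_j,s)$. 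Let $T_{a_j}$ be the M\"obius automorphism of $B_H$ with $T_{a_j}(a_j)=0$; on $\mathbb{C}e_j$ it acts as the disc automorphism $\zeta\mapsto(t_j-\zeta)/(1-t_j\zeta)$ and on $e_j^\perp$ by $w\mapsto -\sqrt{1-t_j^2}\,w/(1-t_j\langle\cdot,e_j\rangle)$. Setting $f_j = T_{a_j}\circ A_j$ yields a holomorphic embedding of $\Omega$ into $B_H$ with $f_j(p_j)=0$ and
$$
\rho_j B_H \subset T_{a_j}(B_H(c_j,s)) \subset f_j(\Omega), \qquad \rho_j = \min_{z\in\partial B_H(c_j,s)}\|T_{a_j}(z)\|,
$$
so that $\sigma_\Omega(p_j)\geq\rho_j$.

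The heart of the proof, and the step I expect to be the main obstacle, is the uniform lower bound $\liminf_j\rho_j>0$. Writing a boundary point of the inner sphere as $z = \zeta e_j + w$ with $w\in e_j^\perp$ and $|\zeta-(1-s)|^2+\|w\|^2 = s^2$, the formulas above give
$$
\|T_{a_j}(z)\|^2 = \frac{|t_j-\zeta|^2 + (1-t_j^2)\bigl(s^2 - |\zeta-(1-s)|^2\bigr)}{|1-t_j\zeta|^2}.
$$
By the rotational symmetry of the configuration the minimum over the inner sphere is attained for real $\zeta$; substituting $\zeta = 1-x$, $t_j = 1-y$ and examining the regime $x=\lambda y$ as $y\to 0$ reduces matters to minimising $g(\lambda) = 1 - 4(1-s)\lambda/(\lambda+1)^2$ over $\lambda\geq 0$, whose minimum is $s$, attained at $\lambda=1$, whereas for $x$ not comparable to $y$ the quotient tends to $1$. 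Hence $\rho_j^2\to s = r/R$, so $\liminf_j\rho_j = \sqrt{r/R}>0$, contradicting $\sigma_\Omega(p_j)\to 0$. Therefore $\hat\sigma_\Omega>0$ and $\Omega$ is HHR. The two delicate points are justifying that the minimisation reduces to the real axis and controlling the minimum uniformly in $j$ rather than only in the limit; both should follow from the axial symmetry and from the fact that $s = r/R$ is a universal constant furnished by uniform ellipticity.
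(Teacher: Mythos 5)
Your proposal is correct, but the central construction is genuinely different from the paper's. Both arguments share the same skeleton: assume $\sigma_\Omega(p_\nu)\to 0$, invoke Lemmas \ref{boundary} and \ref{lin} to produce nearest boundary points $q_\nu$ with $p_\nu$ colinear with $q_\nu$ and the outer centre $q_\nu''$, and then exhibit an embedding centred at $p_\nu$ whose image is uniformly squeezed between two balls. Where the paper uses an anisotropic dilation $L_\nu$ (scaling the normal direction by $1/\lambda_\nu$ and the tangential directions by $1/\sqrt{\lambda_\nu}$) followed by a Cayley transform, and must then verify both inclusions $B_H(0,\sqrt{r/(2+2r)})\subset \Phi L_\nu(\Omega)\subset B_H(0,\sqrt{1+R})$ by hand, you normalise the circumscribed ball to $B_H$ and apply the M\"obius automorphism $T_{a_j}$; the outer inclusion is then automatic because $T_{a_j}$ preserves $B_H$, and only the image of the inscribed ball needs to be tracked. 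Your route yields the asymptotically sharper and scale-invariant bound $\sqrt{r/R}$, versus the paper's $\sqrt{r/(2(1+r)(1+R))}$, at the cost of needing the explicit ball automorphisms. The two points you flag as delicate do close: using the identity $1-\|T_a(z)\|^2=(1-\|a\|^2)(1-\|z\|^2)/|1-\langle z,a\rangle|^2$ and the fact that $1-\|z\|^2=2(1-s)(1-\operatorname{Re}\zeta)$ on the inner sphere, minimising $\|T_{a_j}(z)\|^2$ amounts to maximising $(1-\operatorname{Re}\zeta)/|1-t\zeta|^2$; for fixed $\operatorname{Re}\zeta$ this is monotone decreasing in $|\operatorname{Im}\zeta|$ (this monotonicity, rather than rotational symmetry, is the correct reason one may take $\zeta$ real), and on the interval $[1-2s,1]$ the maximum equals $1/(4t(1-t))$, attained at $\zeta=2-1/t$ once $t>1/(1+2s)$, which gives the exact value $\rho_j^2=1-(1-s)(1+t_j)/(2t_j)\to s$. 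Since this is an explicit continuous function of $t_j$ tending to $s>0$, uniformity for large $j$ is immediate, which is all the contradiction requires. Two cosmetic repairs: in infinite dimensions write $\inf$ rather than $\min$ over the sphere until the reduction to real $\zeta$ restores compactness, and justify $\rho_jB_H\subset T_{a_j}(B_H(c_j,s))$ by connectedness of $\rho_jB_H$ together with $\partial\bigl(T_{a_j}(B_H(c_j,s))\bigr)=T_{a_j}\bigl(\partial B_H(c_j,s)\bigr)$, which holds because $T_{a_j}$ extends to a homeomorphism of $\overline{B_H}$.
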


\begin{proof} \rm
We need to show that the squeezing function $\sigma_\Omega$ of $\Omega$ has a strictly positive lower
bound. Suppose, to the contrary, that there is a sequence $(p_\nu)$ in $\Omega$ such that
\begin{equation}\label{zero}
\lim_{\nu\rightarrow \infty} \sigma_\Omega(p_\nu) =0.
\end{equation}
We deduce a contradiction.
By Lemma \ref{boundary}, we may assume, by choosing another sequence if necessary,
that $ d(p_\nu, \partial \Omega)$ converges to $0$ as $\nu \rightarrow \infty$
and one can find a boundary point  $q_\nu \in \partial \Omega$ such that
 $$\| q_\nu -  p_\nu\|=d( p_\nu, \partial \Omega)> 0.$$

Write $\lambda_\nu = d( p_\nu, \partial \Omega)$ and let
$$B_V (q_\nu', r) \subset \Omega \subset B_V (q_\nu'', R), \quad
q_\nu\in \partial B_V (q_\nu', r) \cap \partial B_V (q_\nu'', R)$$
be as in the definition of uniformly ellipticity of $\Omega$ where, by Lemma \ref{lin},
$q_\nu$ can be chosen so that $p_\nu$ lies on
the line through $q_\nu$ and $q_\nu''$.

We complete the proof by a contradiction that there is
a subsequence $(p_{\nu'})$ of $(p_\nu)$ and a constant
$\delta > 0$ satisfying
\[
\sigma_\Omega (p_{\nu'}) > \delta \quad \mbox{for all } \nu'.
\]
In fact, $\delta$ depends only on $r$ and $R$.

For each  $\nu$, we define a holomorphic embedding
$$\Phi\circ L_{\nu} : \Omega \rightarrow H$$
as follows.
Let $\mathbf{e^1}$ be the unit vector
\[
\mathbf{e^1} :=  \frac{q_\nu'' -q_\nu}{\|q_\nu''-q_\nu\|}.
\]
We have
\begin{itemize}
\item[(S1)] $q_\nu'' = R\mathbf{e}^1+q_\nu, ~q_\nu'=r\mathbf{e}^1 + q_\nu$,
\item[(S2)] $p_\nu = \lambda_\nu \mathbf{e}^1+q_\nu~$
($\lambda_\nu \rightarrow 0$ as $\nu \rightarrow \infty$).
\end{itemize}
Since $\sigma_{\Omega}(p_\nu) = \sigma_{\Omega-q_\nu}(p_\nu-q_\nu)$,
taking a translation, we may assume $q_\nu =0$. Then we have
\begin{itemize}
\item[(S1$^0$)] $ q_\nu'' = R \mathbf{e}^1, ~\vp q_\nu'=r \mathbf{e}^1$,
\item[(S2$^0$)] $ p_\nu = \lambda_\nu \mathbf{e^1} $.
\end{itemize}
We now have
\begin{equation}\label{R}
p_\nu = \lambda_\nu \mathbf{e}^1\in   B_H(r\mathbf{e}^1,r) \subset
\Omega \subset
  B_H(R\mathbf{e}^1, R).
\end{equation}
where
$$q_\nu' =  r \mathbf{e}^1, \quad
 q_\nu'' =  R \mathbf{e}^1.$$

Extend  $\{\mathbf{e}^1\}$ to an orthonormal
basis $\{\mathbf{e}^\gamma\}_{\gamma \in \Gamma}$ in $H$.
For each $z\in H$, we will write
$$z = \sum_{\gamma \in \Gamma} z_\gamma \mathbf{e}^\gamma = z_1 \mathbf{e}^1
+\sum_{\gamma \neq 1} z_\gamma \mathbf{e}^\gamma$$
with $z_\gamma \in \CC$.
We have $$z \in  B_H(r\mathbf{e}^1,r) \Leftrightarrow \| z -r\mathbf{e}^1\|<r$$
where
\begin{equation}\label{V1}
\| z -r\mathbf{e}^1\|^2 =|z_1 -r|^2 + \sum_{\gamma \neq 1} |z_\gamma|^2
= |z_1|^2 - 2r{\rm Re}\,z_1 + r^2
+ \sum_{\gamma \neq 1} |z_\gamma|^2.
\end{equation}

We definite a dilation $L_\nu : H \rightarrow H$  by
\[
L_\nu (z) = \frac{z_1}{ \lambda_\nu} \mathbf{e}^1
+ \frac{1}{\sqrt{\lambda_\nu}}\sum_{\gamma\neq 1}z_\gamma\mathbf{e}^\gamma,
\quad z= \sum_{\gamma \in \Gamma} z_\gamma \mathbf{e}^\gamma
\]
which satisfies $L_{\nu}(p_\nu) = \mathbf{e}^1$.  The map $L_\nu$ is a linear
homeomorphism of $\cH$, with inverse
$$L_\nu^{-1}(z) =  \lambda_\nu z_1 \mathbf{e}^1 + \sqrt{\lambda_\nu}\sum_{\gamma\neq1}z_\gamma\mathbf{e}^\gamma.$$

Define a Cayley transform $\Phi : \{z\in H : {\rm Re}\, z_1 > 0\} \rightarrow H$ by
\[
 \Phi (z) := \frac{z_1-1}{z_1+1} \mathbf{e}^1
+ \sum_{\gamma\neq 1}\frac{\sqrt{2}z_\gamma}{z_1+1} \mathbf{e}^\gamma, \quad z= \sum_\gamma z_\gamma \mathbf{e}^\gamma
\]
and the holomorphic embedding $$\Phi\circ L_{\nu} : \Omega \rightarrow H$$ where
$\Phi(L_{\nu}( p_\nu))=0$. Although $\Phi$ depends on $\nu$, we omit the subscript $\nu$ indicating
this, to simplify notation, since confusion is unlikely.

We will show that
$$B_H\left(0, \sqrt {\frac{r}{2+2r}}\,\right) \subset \Phi(L_{{\nu}}( \Omega ))\subset B_H(0,\sqrt{1+R})$$
for sufficiently large $\nu$.

Substituting $R$ for $r$ in (\ref{V1}), we see that
$$B_H( R \mathbf{e}^1,  R)= \{z\in H: \|z - R\mathbf{e}^1\|^2  <  R^2 \}
=\{z\in H: \sum_{\gamma\in \Gamma}|z_\gamma|^2  < 2 R \text{ Re } z_1\}.$$
Given $\zeta = \sum_\gamma \zeta_\gamma \mathbf{e}^\gamma \in B_H$, we have
$$\Phi^{-1}(\zeta) = \frac{1+\zeta_1}{1-\zeta_1}\mathbf{e}^1
+ \sum_{\gamma\neq 1}\frac{\sqrt 2 \zeta_\gamma}{1-\zeta_1}\mathbf{e}^\gamma.
$$
Hence
\begin{eqnarray*}
&&\zeta \in \Phi L_{\nu}(B_H( R \mathbf{e}^1,R)) \Leftrightarrow  L_{\nu}^{-1}\Phi^{-1}\zeta \in
B_H( R \mathbf{e}^1, R)\\
&\Leftrightarrow& \lambda_{\nu} \left(\frac{1+ \zeta_1}{1-\zeta_1}\right)\mathbf{e}^1
+\sum_{\gamma\neq 1} \frac{\sqrt {2 \lambda_\nu}}{1-\zeta_1}\,\mathbf{e}^\gamma \in B_H( R \mathbf{e}^1, R)\\
&\Leftrightarrow& \alpha\lambda_\nu^2 |1+\zeta_1|^2 + 2\lambda_\nu \sum_{\gamma\neq1}|\zeta_\gamma|^2
< 2 R\lambda_\nu (1-|\zeta_1|^2)\\
&\Rightarrow &  \sum_{\gamma\neq1}|\zeta_\gamma|^2 <  R (1-|\zeta_1|^2)\\
&\Rightarrow & \|\zeta\|^2=|\zeta_1|^2 +  \sum_{\gamma\neq1}|\zeta_\gamma|^2 < 1+ R
\end{eqnarray*}
and therefore we have, by (\ref{R}),
\begin{equation}\label{large}
\Phi L_{{\nu}}(\Omega) \subset
\Phi L_{{\nu}}(B_H( R \mathbf{e}^1, R))\subset B_H(0, \sqrt{1+R}).
\end{equation}

We now show that $B_H(0,\sqrt {\frac{r}{2+2r}}\,) \subset \Phi L_{{\nu}} (\Omega)$ for sufficiently large $\nu$.
For this, we will make use of the inclusion $B_H(r\mathbf{u},r)\subset \Omega$.

We have
$L_{{\nu}}^{-1}\Phi^{-1}(\zeta) \in B_H(r\mathbf{u},r)$ if and only if
$\|L_{{\nu}}^{-1}\Phi^{-1}(\zeta)  -r\mathbf{u}\|<r$, where
\begin{eqnarray}\label{fin}
&&\|L_{\nu}^{-1} \Phi^{-1}(\zeta) -r\mathbf{u}\|^2 < r^2
\Leftrightarrow \left|\lambda_{\nu}\left(\frac{1+\zeta_1}{1-\zeta_1}\right) -r\right|^2
+\frac{2\lambda_{\nu}}{|1-\zeta_1|^2} \sum_{\gamma \neq 1} |\zeta_\gamma |^2
<r^2\\\nonumber
&\Leftrightarrow& \lambda_\nu(\lambda_\nu|1+\zeta_1|^2- 2r(1-|\zeta_1|^2)
 + 2\sum_{\gamma \neq 1} |\zeta_\gamma|^2) <0.
\end{eqnarray}
For $\zeta \in B_H(0, \sqrt {\frac{r}{2+2r}}\,)$, we have $2r -(2r|\zeta_1|^2 +2\|\zeta\|^2) > r$ and
$|1+\zeta_1|^2 \leq \displaystyle \left(1+ \sqrt {\frac{r}{2+2r}}\,\right)^2$. Since $\lambda_\nu \rightarrow 0$
as $\nu \rightarrow \infty$, there exists ${\nu_0}$ such that $\nu \geq \nu_0$ implies
$$\lambda_\nu < \frac{r}{2\left(1+ \sqrt{\frac{r}{2+2r}}\right)^2}$$
and hence
\begin{eqnarray*}
&&\lambda_\nu|1+\zeta_1|^2- 2r(1-|\zeta_1|^2)
 + 2\sum_{\gamma \neq 1} |\zeta_\gamma|^2
  \leq \lambda_\nu|1+\zeta_1|^2- 2r + 2r|\zeta_1|^2 + 2\|\zeta\|^2\\
&& < \frac{r}{2\left(1+ \sqrt{\frac{r}{2+2r}}\right)^2}|1+\zeta_1|^2 - r < -r/2
\end{eqnarray*}
which gives $\lambda_\nu(\lambda_\nu|1+\zeta_1|^2- 2r(1-|\zeta_1|^2)
 + 2\sum_{\gamma \neq 1} |\zeta_\gamma|^2) <- r\lambda_\nu/2 <0$ and  by (\ref{fin}),
$$ \|L_{\nu}^{-1} \Phi^{-1}(\zeta) -r\mathbf{u}\|^2 < r^2.$$
We have therefore shown that, for  $\nu \geq \nu_0$,
the inclusions
$$B_H(0, \sqrt {r/(2+2r}\,)\subset \Phi L_{\nu}(B_H(r\mathbf{e}^1,r))
\subset \Phi L_{\nu}(\Omega)$$ are satisfied.

Now it follows from this and (\ref{large}) that
\[
\sigma_\Omega (p_{{\nu}})  \ge\sqrt{\frac{ r}{2(1+r)(1+R)}} \,>0
\]
for all $\nu \geq {\nu_0}$, which contradicts
$\lim_{\nu\rightarrow \infty} \sigma_\Omega(p_{{\nu}}) =0$
and completes the proof.

\end{proof}

\end{document}